\newtheorem{Theorem}{Theorem}
\newtheorem{Proposition}[Theorem]{Proposition}
\newtheorem{Definition}[Theorem]{Definition}
\newtheorem{Corollary}[Theorem]{Corollary}
\newtheorem{Remark}[Theorem]{Remark}
\begin{document}

\title{\textbf{Some inequalities for log-convex functions of selfadjoint operators on quaternionic\\ Hilbert spaces}}
\author{ M. Fashandi\footnote{{\small{E-mail:}  fashandi@um.ac.ir (M. Fashandi)}} \\
\footnotesize{  Faculty of Mathematical Sciences, Ferdowsi University of Mashhad,}\\  
  \footnotesize{P.O. Box 1159,  Mashhad, 91775\ Iran }
  }

\date{}
\maketitle

\begin{abstract}
In this paper,  some Jensen's type  inequalities between quaternionic bounded selfadjoint operators on  quaternionic Hilbert spaces are proved, using a log-convex function. 
Also, by applying a specific log-convex function, some particular cases of operator inequalities are obtained.
\end{abstract}

\noindent {\bf Keywords:} Quaternionic Hilbert spaces, Continuous functional calculus, Log-convex function, Operator inequality.\\
\noindent {\it 2010 Mathematics Subject Classification MSC: 47A63, 47A99}  \\
\maketitle

\section{Introduction}
When the exact value of something is not known we must resort to inequalities. Inequalities have been around since the Mesopotamian and Egyptian civilizations, where they were used to figure out how to measure land and build things. Triangle inequality and arithmetic-geometric mean inequality for two numbers, are the initial inequalities that ancient Greeks knew (see \cite{Fink} for comprehensive information about the history of inequalities in Mathematics). Up to now, many inequalities have been proved  among them, some  are ``named inequality" that are ``attributable to someone who stated it or proved it" and  ``someone must judge that it is important enough to be referred to by name and others must find attribution important" (\cite{Fink}). In his retiring presidential (London Mathematical Society) address, G. H. Hardy on November 8, 1928,  said  ``all analysts spend half their time hunting through the literature for inequalities which they want to use and cannot prove" (\cite{Fink}). The importance of  inequalities is not limited to obtaining bounds, many definitions in mathematics are based on inequalities such as the definition of limit or convex functions, etc. With the progress of mathematics, not only new inequalities were proved, but some old inequalities have been refined and some of them found new applications in science and engineering. In addition, the extension of inequalities to other spaces with new tools has led to growth in various mathematical disciplines. One of the subjects of high interest is operator inequality, especially for the operators defined on Hilbert spaces (see for instance \cite{Drag2}, \cite{Drag},  \cite{Huy}, \cite{Kit}, \cite{Mond} and \cite{Furuta1}) and another path was found to extend operator inequalities to the spaces similar to Hilbert spaces, such as Hilbert $C^*$-modules, direct sum of Hilbert spaces, etc., or different kind of operators (see \cite{Fuji} and \cite{Drag4}). Quaternionic Hilbert spaces have  recently attracted the attention of physicists and mathematical physicists, as they are  more suitable for dealing with quantum systems (for more information, see \cite{Ghi1}). Getting to know quaternionic Hilbert spaces, made the authors think about operator inequalities on such spaces and recently some papers have been published in this direction (see \cite{Kont} and \cite{Adv.}). In this paper, we extend some of the operator inequalities related to selfadjoint operators on complex Hilbert spaces which have been proved in \cite{Drag}, to the ones on right quaternionic Hilbert spaces. The results of this study are based on the inequalities obtained  in  \cite{Adv.}.

\section{Notations and Preliminaries}

Through this paper,  $\mathbb{H}$ will stand for the skew field of quaternions, whose elements are in the form $q= x_0+x_1i+ x_2j+x_3k$, where $x_0,x_1, x_2$ and $x_3$ are real numbers and $i, j,$ and $k$ are called imaginary units and obey the following multiplication rules:
\begin{equation*}
i^2=j^2=k^2=-1,\;\; ij=-ji=k,\;\; jk=-kj=i,\;\; {\rm\mbox and}\;\; ki=-ik=j.
\end{equation*}
Also, $\mathsf{H}$ denotes a right quaternionic Hilbert space which is a linear vector space over the skew field of  quaternions under right scalar multiplication and an inner product 
$\langle.,.\rangle:\mathsf{H}\times \mathsf{H}\longrightarrow \mathbb{H}$
with the following properties \begin{enumerate}
\item[(i)] $\overline{\langle u, v\rangle}=\langle v, u\rangle$,
\item[(ii)]$\langle u, u\rangle>0$ unless $u=0$,
\item[(iii)] $\langle u,vp+wq\rangle=\langle u, v\rangle p+\langle u, w\rangle q$,
\end{enumerate}
for every $u, v, w\in \mathsf{H}$ and $p, q\in \mathbb{H}$ and the quaternionic norm  $\Vert u \Vert= \sqrt{\langle u, u\rangle}$
that is assumed to produce a complete metric space (see  Proposition 2.2 in  \cite{Ghi1}). 
The notation $\mathfrak{B}(\mathsf{H})$ shows the set all bounded right linear operators on $\mathsf{H}$ that means 
 $$T(up+v)=(Tu)p+Tv,$$
for all $u, v \in \mathsf{H}$ and $p\in \mathbb{H}$. By  Proposition 2.11 in \cite{Ghi1}, $\mathfrak{B}(\mathsf{H})$  is a complete normed space with the norm defined by 
\begin{equation}\label{norm1}
\Vert T\Vert= \sup \left\{\dfrac{\Vert Tu\Vert}{\Vert u\Vert}, 0\neq u\in\mathsf{H}\right\}.
\end{equation}
Adjoint of $T\in \mathfrak{B}(\mathsf{H})$, selfadjoint, positive, normal and unitary right linear operators in $\mathfrak{B}(\mathsf{H})$ are defined similar to the complex case (see Definition 2.12 in \cite{Ghi1}).
In section 4 in \cite{Ghi1}, one can see the extention of the definitions of spectrum and resolvent in $\mathsf{H}$, as follows.
\begin{Definition}\label{spectrum}\cite{Ghi1}
For  $T\in \mathfrak{B}(\mathsf{H})$ and $q\in \mathbb{H}$, the associated operator $\Delta_{q}(T)$ is defined by:
$$
\Delta_q(T):=T^2- T(q+\bar{q})+I \vert q\vert^2.
$$
The spherical resolvent set of $T\in \mathfrak{B}(\mathsf{H})$ is the set
$$
\rho_{S}(T):=\{q\in\mathbb{H}:\,\,\, \Delta_q(T)^{-1}\in \mathfrak{B}(\mathsf{H})\}.
$$
\end{Definition}
\noindent The \textit{spherical spectrum} $\sigma_{S}(T)$ of $T$ is defined as the complement of $\rho_{S}(T)$ in $\mathbb{H}$.  A partition for $\sigma_{S}(T)$ was introduced in \cite{Ghi1}, as follows:
\begin{itemize}
\item[(i)] The \textit{spherical point spectrum} of $T$:
$$\sigma_{pS}(T)=\{q\in\mathbb{H}; \ker(\Delta_{q}(T))\neq\{0\}\}.$$
\item[(ii)] The\textit{ spherical residual spectrum} of $T$:
$$\sigma_{rS}(T)=\{q\in\mathbb{H}; \ker(\Delta_{q}(T))=\{0\}, \overline{Ran (\Delta_{q}(T))}\neq\mathsf{H}\}.$$
\item[(iii)] The spherical continuous spectrum of $T$:
$$\sigma_{cS}(T)=\{q\in\mathbb{H}; \ker(\Delta_{q}(T))=\{0\}, \overline{Ran (\Delta_{q}(T))}=\mathsf{H}, 
\Delta_{q}(T)^{-1}\notin \mathfrak{B}(\mathsf{H})\}.$$
\end{itemize}
The \textit{spherical spectral radius} of $T$, denoted by $r_{S}(T)$,  is defined by:
\begin{equation}\label{radius}
r_{S}(T)=\sup\{\vert q\vert \in \mathbb{R}^{+}; q\in \sigma_{S}(T)\}.
\end{equation}
An \textit{eigenvector} of $T$ with \textit{right} \textit{eigenvalue} $q$ is an element $u\in \mathsf{H}-\{0\}$, for which $Tu=uq$.
According to Proposition 4.5 in \cite{Ghi1}, $q$ is a right eigenvalue of $T$ if and only if it  belongs to the spherical  point spectrum $\sigma_{pS}(T)$. 

\noindent To start studying operator inequalities we must be familiar with continuous functional calculus. Fortunately, Ghiloni et al. in \cite{Ghi1} have paved the way to this matter in the right quaternionic Hilbert space and we borrow some results from \cite{Ghi1} exactly to use them throughout this paper which are stated in the next remark.

\begin{Remark}\label{remark}
For $T\in\mathfrak{B}(\mathsf{H})$ the following assertions hold.
\begin{itemize}
\item[(a)] Let $q\in \mathbb{H}$ with $|q|> \Vert T\Vert$ and, for each $n\in \mathbb{N}$, let $a_n$ be the real number defined by setting $a_n:= \vert q \vert^{-2n-2}\sum_{h=0}^n q^h\, (\overline{q})^{n-h}$. Then, the series $\sum_{n\in \mathbb{N}}T^n a_n$ converges absolutely in $\mathfrak{B}(\mathsf{H})$ (with respect to the operator norm $\Vert .\Vert$) to $\Delta_q (T)^{-1}$. In particular, $r_S(T)\leq \Vert T\Vert$ (see Theorem 4.3.(a) in \cite{Ghi1}).
\item[(b)] If $T$ is normal, then $r_S(T)=\Vert T\Vert$ (see Theorem 4.3.(e) in \cite{Ghi1}).
\item[(c)] $\sigma_S (T)$ is a non--empty compact subset of $\mathbb{H}$ (see Theorem 4.3.(b) in \cite{Ghi1}).
\item[(d)] If $T$ is selfadjoint, then $\sigma_S(T)\subset \mathbb{R}$ and $\sigma_{rS}(T)$ is empty (see Theorem 4.8.(b) in \cite{Ghi1}).
\item[(e)] If $T$ is selfadjoint, then there exists a unique continuous homomorphism 
\begin{eqnarray*}
\Phi_T: \mathcal{C}(\sigma_S(T), \mathbb{R})\ni f\mapsto f(T)\in \mathfrak{B}(\mathsf{H})
\end{eqnarray*}
of real Banach unital algebras such that:
\begin{itemize}
\item[(i)] The operator $f(T)$ is selfadjoint for every $f\in \mathcal{C}(\sigma_S(T), \mathbb{R})$.
\item[(ii)] $\Phi_T$ is isometric; that is $\Vert f(T)\Vert=\Vert f\Vert_\infty$ for every $f\in \mathcal{C}(\sigma_S(T), \mathbb{R})$.
\item[(iii)] $\Phi_T$ is positive; that is, $f(T)\geq 0$ if $f \in  \mathcal{C}(\sigma_S(T), \mathbb{R})$ and $f(t)\geq 0$ for every $t\in \sigma_S(T)$.
\end{itemize}
By $\mathcal{C}(\sigma_S(T), \mathbb{R})$, we mean the commutative real Banach unital algebra  of continuous real-valued functions defined on $\sigma_S(T)$ (see Theorem 5.5 in \cite{Ghi1}). 
\end{itemize}
\end{Remark}

\begin{Remark} Note that Remark \ref{remark}(iii) implies that  for a selfadjoint operator $T\in\mathfrak{B}(\mathsf{H})$, if $f, g \in  \mathcal{C}(\sigma_S(T), \mathbb{R})$ and $f(t)\geq g(t)$ for every $t\in \sigma_S(T)$, then $f(T)\geq g(T)$.\end{Remark}
\noindent  For  two operators $S$ and $T \in \mathfrak{B}(\mathsf{H})$, we write $S \leq T$ if $T-S$ is a positive operator, i.e. $\langle Sx, x\rangle \leq \langle Tx, x\rangle$ for every $x\in \mathsf{H}$. In particular,  for two real numbers $m<M$, by $mI\leq T\leq MI$,  we mean
$
m\langle x, x\rangle\leq \langle Tx, x\rangle\leq M\langle x, x\rangle
$
for all $x\in \mathsf{H}$.

\noindent In this paper, we focus on selfadjoint operators on the right quaternionic Hilbert space $\mathsf{H}$ and since the continuous functional calculus is closely related to $\sigma_S(T)$, it is necessary to know it. In Theorem 3 in \cite{Adv.}, enough information about $\sigma_S(T)$ has been obtained.
\begin{Theorem}
\label{compact interval}
For $S, T\in\mathfrak{B}(\mathsf{H})$,
\begin{itemize}
\item[(i)]
If $T$ is a selfadjoint  operator and
\begin{equation*}
\label{infsup}
m_T=\inf \{\langle Tx,x\rangle\,:  \Vert x\Vert=1,\,x\in \mathsf{H}\}, \, \mbox{and}\,\, M_T=\sup \{\langle Tx,x\rangle\, : \Vert x\Vert=1,\,x\in \mathsf{H}\},
\end{equation*}
then $\sigma_S(T)\subset [m_T, M_T]$, $m_T=\min \sigma_S(T)$ and  $M_T=\max \sigma_S(T)$(see Theorem 3 in\cite{Adv.}).
\item[(ii)]
$\sigma_S(ST)\cup \{0 \}=\sigma_S(TS)\cup \{0 \}$ and $r_S(ST)=r_S(TS)$ (see Theorem 5  in \cite{Adv.}).
\item[(iii)] if  $ST=TS$ then $\sigma_S(S+T)\subset \sigma_S(T)+\sigma_S(S)$ (see Proposition 5.1. in \cite{Berb}).
\end{itemize}
\end{Theorem}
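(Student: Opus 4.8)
All three assertions are quaternionic transcriptions of classical spectral facts, and my plan is to carry the classical Hilbert-space arguments through the $\Delta_q$-calculus of Definition \ref{spectrum}, watching the places where non-commutativity of $\mathbb{H}$ intervenes. For \emph{part (i)} I would first note that selfadjointness of $T$ forces $\langle Tx,x\rangle$ to be real, since $\langle Tx,x\rangle=\langle x,Tx\rangle=\overline{\langle Tx,x\rangle}$; hence $m_T$ and $M_T$ are well-defined real numbers, and Remark \ref{remark}(d) already gives $\sigma_S(T)\subset\mathbb{R}$. The key algebraic observation is that for a \emph{real} $\lambda$ one has $\Delta_\lambda(T)=T^2-2\lambda T+\lambda^2 I=(T-\lambda I)^2$, so $\lambda\in\rho_S(T)$ exactly when $T-\lambda I$ is invertible in $\mathfrak{B}(\mathsf{H})$. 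If $\lambda>M_T$ then $\langle(\lambda I-T)x,x\rangle\ge(\lambda-M_T)\Vert x\Vert^2$, so $\lambda I-T$ is bounded below and invertible (symmetrically for $\lambda<m_T$), which yields $\sigma_S(T)\subset[m_T,M_T]$. To obtain $M_T=\max\sigma_S(T)$, I would pick unit vectors $x_n$ with $\langle Tx_n,x_n\rangle\to M_T$, set $A:=M_TI-T\ge 0$, and apply the Cauchy--Schwarz inequality for the positive form $(x,y)\mapsto\langle Ax,y\rangle$ to get $\Vert Ax_n\Vert^4\le\langle Ax_n,x_n\rangle\,\Vert A\Vert^3\to 0$; hence $A$ is not bounded below and $M_T\in\sigma_S(T)$, the case of $m_T$ being identical.

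For \emph{part (ii)} the engine is the pair of intertwining identities
\[
S\,\Delta_q(TS)=\Delta_q(ST)\,S,\qquad T\,\Delta_q(ST)=\Delta_q(TS)\,T,
\]
which follow by expanding $\Delta_q$ and using that $q+\bar q$ and $|q|^2$ are real, hence central. Assuming $q\neq 0$ and that $R:=\Delta_q(ST)^{-1}\in\mathfrak{B}(\mathsf{H})$, I would set $s:=q+\bar q$, $n:=|q|^2$ and $C:=T\,R\,(ST-sI)\,S$, and verify, using the two identities together with the fact that $\Delta_q(ST)$ commutes with $ST$, that
\[
(I-C)\,\Delta_q(TS)=\Delta_q(TS)\,(I-C)=nI.
\]
Since $n\neq 0$, this exhibits $\tfrac{1}{n}(I-C)$ as a bounded two-sided inverse of $\Delta_q(TS)$, so $q\in\rho_S(TS)$; the symmetric computation gives the reverse implication, whence $\rho_S(ST)\setminus\{0\}=\rho_S(TS)\setminus\{0\}$, i.e. $\sigma_S(ST)\cup\{0\}=\sigma_S(TS)\cup\{0\}$. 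Taking the supremum of $|q|$ and noting that adjoining $0$ cannot change that supremum yields $r_S(ST)=r_S(TS)$.

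\emph{Part (iii)} is the subtlest, since one must make sense of the sumset $\sigma_S(T)+\sigma_S(S)$ while the $S$-spectrum is a union of spheres and $\mathbb{H}$ is non-commutative. The plan is to embed $S$, $T$ and $I$ in a maximal commutative real subalgebra $\mathcal{A}\subset\mathfrak{B}(\mathsf{H})$ and to invoke the commutative spectral theory behind Proposition 5.1 of \cite{Berb}: for commuting elements one controls $\sigma_S(S+T)$ by the joint $S$-spectrum, so that every $q\in\sigma_S(S+T)$ arises as $a+b$ with $a\in\sigma_S(S)$ and $b\in\sigma_S(T)$. The main obstacle, and the reason this part is genuinely harder than its complex counterpart, is setting up this joint-spectrum/Gelfand machinery in the quaternionic category so that the sum ``$a+b$'' is meaningful and the inclusion respects the spherical structure; granting that machinery (as in \cite{Berb}), the inclusion is immediate. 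Since in the present paper these three facts serve only as ready-made tools, in the body of the text I would simply cite them, but the above indicates the route by which each is established.
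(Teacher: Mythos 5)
The paper does not prove this theorem at all: it is a preliminary statement assembled entirely from citations (Theorems 3 and 5 of \cite{Adv.} for parts (i) and (ii), Proposition 5.1 of \cite{Berb} for part (iii)), so there is no in-paper argument to compare against. Your reconstructions of (i) and (ii) are correct and essentially recover the standard proofs behind those citations. In (i), the reduction $\Delta_\lambda(T)=(T-\lambda I)^2$ for real $\lambda$, together with the (easy but worth stating) fact that $A^2$ invertible forces $A$ invertible, correctly converts membership in $\sigma_S(T)$ into non-invertibility of $T-\lambda I$; the bounded-below argument and the generalized Cauchy--Schwarz estimate $\Vert Ax_n\Vert^4\le\langle Ax_n,x_n\rangle\Vert A\Vert^3$ both go through verbatim in the quaternionic setting. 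In (ii), your choice $C=TR(ST-sI)S$ does work: using $S\Delta_q(TS)=\Delta_q(ST)S$, $T\Delta_q(ST)=\Delta_q(TS)T$ and the commutation of $R$ with $ST$, one computes $C\Delta_q(TS)=\Delta_q(TS)C=\Delta_q(TS)-nI$, so $\tfrac1n(I-C)$ is a two-sided inverse of $\Delta_q(TS)$ when $q\ne 0$, exactly as you claim. The one place where your proposal, like the paper, stops short of a proof is part (iii): you describe the shape of the Berberian-extension argument but do not carry it out. Note also that your stated worry about making sense of the sumset is misplaced --- addition in $\mathbb{H}$ is commutative, so $\sigma_S(T)+\sigma_S(S)$ is unambiguous; the genuine content of Proposition 5.1 in \cite{Berb} is the approximate-point-spectrum characterization for commuting operators, which you correctly identify as the needed machinery but only cite. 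Since the paper itself treats all three parts as imported tools, this does not undermine your proposal; it simply means (iii) remains a citation in both treatments while you have supplied honest proofs of (i) and (ii) that the paper omits.
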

Finally,  we need  the quaternionic Mond-Pe\v{c}ari\'{c},  Lah-Ribari\v{c} and Holder-McCarthy inequalities from \cite{Adv.} to prove the results of this paper  (see Theorems 6  and 13 in \cite{Adv.}).
\begin{Theorem}\label{Mond}\cite{Adv.} 
Let $T\in \mathfrak{B}(\mathsf{H})$ be a selfadjoint operator with $\sigma_S(T)\subset [m, M]$ for two real numbers $m<M$. If  $f:[m, M]\longrightarrow \mathbb{R}$ is a continuous convex function, then for each unit vector $x\in\mathsf{H}$,
\begin{itemize}
\item[(i)] (Quaternionic Mond-Pe\v{c}ari\'{c} inequality) $f(\langle Tx, x\rangle)\leq \langle f(T)x, x\rangle,$
\item[(ii)]  (Quaternionic Lah-Ribari\v{c} inequality) $\langle f(T)x, x\rangle \leq \dfrac{M-\langle Tx, x\rangle}{M-m}f(m)+\dfrac{\langle Tx, x\rangle -m}{M-m}f(M).$
\end{itemize}
\end{Theorem}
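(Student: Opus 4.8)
The plan is to reduce each of the two inequalities to an elementary pointwise inequality for the scalar convex function $f$ on $[m,M]$, to lift that scalar inequality to an operator inequality through the order-preserving functional calculus of Remark \ref{remark}(e), and finally to pair the resulting operator inequality with the vector state $A\mapsto\langle Ax,x\rangle$ for the fixed unit vector $x$. Two preliminary observations make this clean. First, since $T$ is selfadjoint we have $\langle Tx,x\rangle=\langle x,Tx\rangle=\overline{\langle Tx,x\rangle}$ by the adjoint relation and property (i) of the inner product, so $c:=\langle Tx,x\rangle$ is a real number; likewise $\langle f(T)x,x\rangle$ is real because $f(T)$ is selfadjoint by Remark \ref{remark}(e)(i), so both inequalities are genuine real-number statements. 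Second, because $\sigma_S(T)\subset[m,M]$, Theorem \ref{compact interval}(i) gives $m\le m_T\le\langle Tx,x\rangle\le M_T\le M$ for every unit vector, so $c\in[m,M]$ and the scalars $f(m),f(M)$ and the convex combination used below are all meaningful.

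For part (i), I would invoke the supporting-line property of convex functions at the point $c=\langle Tx,x\rangle\in[m,M]$: there is a real slope $\alpha$ with $f(t)\ge f(c)+\alpha(t-c)$ for all $t\in[m,M]$, and in particular on $\sigma_S(T)$. Applying the monotonicity consequence of the positivity of $\Phi_T$ (Remark \ref{remark}(e)(iii) and the remark following it) to the two continuous functions $t\mapsto f(t)$ and $t\mapsto f(c)+\alpha(t-c)$ yields $f(T)\ge f(c)I+\alpha(T-cI)$. Pairing with $x$ and using $\langle x,x\rangle=1$ together with $\langle Tx,x\rangle=c$, the affine term contributes $f(c)+\alpha(c-c)=f(c)$, so $\langle f(T)x,x\rangle\ge f(c)=f(\langle Tx,x\rangle)$, which is (i).

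For part (ii), the relevant scalar fact is the chord inequality: writing each $t\in[m,M]$ as the convex combination $t=\frac{M-t}{M-m}\,m+\frac{t-m}{M-m}\,M$, convexity gives $f(t)\le\frac{M-t}{M-m}f(m)+\frac{t-m}{M-m}f(M)$ for all $t\in[m,M]$. The right-hand side is the affine function $g(t)=\frac{M-t}{M-m}f(m)+\frac{t-m}{M-m}f(M)$; applying the same order-preservation on $\sigma_S(T)$ gives $f(T)\le g(T)=\frac{MI-T}{M-m}f(m)+\frac{T-mI}{M-m}f(M)$. Taking the vector state and again using $\langle x,x\rangle=1$ produces exactly the asserted bound.

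The only genuinely quaternionic point — and the step I would be most careful about — is that every scalar appearing ($\alpha,f(m),f(M),m,M,c$) is real and hence central in $\mathbb{H}$, so it factors out of $\langle\cdot,\cdot\rangle$ on either side without conjugation issues: property (iii) gives $\langle u,v\lambda\rangle=\langle u,v\rangle\lambda$ for real $\lambda$, and combining with property (i) gives $\langle u\lambda,v\rangle=\lambda\langle u,v\rangle$ as well. This is what lets the affine combinations of $I$ and $T$ be evaluated term by term against $x$. The underlying convexity inequalities are purely real-variable, and the functional calculus with its positivity (Remark \ref{remark}(e)) was already established on $\mathsf{H}$ in \cite{Ghi1}; once the reality and the location in $[m,M]$ of $\langle Tx,x\rangle$ are pinned down, no further quaternionic difficulty arises. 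A minor technical point to dispatch is the existence of the supporting slope $\alpha$ when $c$ is an endpoint of $[m,M]$, which follows from the finiteness of the one-sided derivatives of a continuous convex function on a closed interval and can in any case be handled by continuity.
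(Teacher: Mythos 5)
The paper contains no proof of this theorem to compare against: it is imported verbatim from \cite{Adv.} (Theorems 6 and 13 there) as a tool for the later results. Your argument is the standard one for the Mond--Pe\v{c}ari\'{c} and Lah--Ribari\v{c} inequalities --- a supporting line at $c=\langle Tx,x\rangle$ for (i), the chord over $[m,M]$ for (ii), both lifted to operator inequalities through the positivity of $\Phi_T$ (Remark \ref{remark}(e)(iii) and the remark after it) and then evaluated against the vector state --- and the quaternionic bookkeeping you single out (reality, hence centrality, of all scalars involved; the verification via Theorem \ref{compact interval}(i) that $\langle Tx,x\rangle\in[m,M]$; selfadjointness of $f(T)$ so that both sides are real) is exactly what needs to be checked. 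Part (ii) as you present it is complete and correct.

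The one genuine flaw is your justification of the supporting slope in (i) when $c$ is an endpoint. You appeal to ``the finiteness of the one-sided derivatives of a continuous convex function on a closed interval,'' but this is false: $f(t)=-\sqrt{t-m}$ is continuous and convex on $[m,M]$ with $f'_+(m)=-\infty$, so no supporting line at $t=m$ exists, and the vague fallback ``handled by continuity'' does not obviously apply since $f$ is not defined outside $[m,M]$. The endpoint case does occur (precisely when $m=\min\sigma_S(T)$ and the infimum $m_T$ is attained at $x$), so it cannot be waved away. The correct repair is short: if $\langle Tx,x\rangle=m$, then $T-mI\ge 0$ and $\langle (T-mI)x,x\rangle=0$, so $\Vert (T-mI)^{1/2}x\Vert^2=0$, hence $(T-mI)x=0$ and $Tx=xm$; then $p(T)x=x\,p(m)$ for every real polynomial $p$, and by density and the continuity of $\Phi_T$ one gets $f(T)x=x f(m)$, so $\langle f(T)x,x\rangle=f(m)=f(\langle Tx,x\rangle)$ and (i) holds with equality. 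The case $\langle Tx,x\rangle=M$ is symmetric. With that substitution your proof is complete.
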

\begin{Theorem}\label{Holder-McCarthy}\cite{Adv.} (Quatenionic Holder-McCarthy inequality)
If $T\in\mathfrak{B}(\mathsf{H})$ is a positive operator, then
\begin{itemize}
\item[(i)] $\langle T^r x, x\rangle \geq \langle Tx, x\rangle ^r$ for all $r>1$ and any unit vector $x\in \mathsf{H},$
\item[(ii)] $\langle T^r x, x\rangle \leq \langle Tx, x\rangle ^r$ for all $0<r<1$ and any unit vector $x\in \mathsf{H},$
\item[(iii)] If $T$ is invertible, then $\langle T^r x, x\rangle \geq \langle Tx, x\rangle ^r$ for all $r<0$ and any unit vector $x\in \mathsf{H}$.
\end{itemize}
\end{Theorem}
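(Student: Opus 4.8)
The plan is to obtain all three parts as immediate consequences of the Quaternionic Mond-Pe\v{c}ari\'{c} inequality (Theorem \ref{Mond}(i)) applied to the power function $f(t)=t^r$, exploiting its convexity or concavity according to the range of $r$. First I would record the preliminary observations that make the statement meaningful. Since $T$ is positive it is in particular selfadjoint, so combining selfadjointness with property (i) of the inner product gives $\langle Tx,x\rangle=\overline{\langle Tx,x\rangle}$, whence $\langle Tx,x\rangle$ is a \emph{real} number; moreover positivity forces $\langle Tx,x\rangle\geq 0$, so the scalar power $\langle Tx,x\rangle^r$ is well defined. By Theorem \ref{compact interval}(i), writing $m=m_T$ and $M=M_T$, we have $\sigma_S(T)\subset[m,M]$ with $0\leq m\leq M$ and $\langle Tx,x\rangle\in[m,M]$ for every unit vector $x$; and via the continuous functional calculus of Remark \ref{remark}(e) the operator $f(T)$ determined by $f(t)=t^r$ is precisely $T^r$, so $\langle f(T)x,x\rangle=\langle T^r x,x\rangle$.

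For part (i), with $r>1$ the function $f(t)=t^r$ is continuous and convex on $[m,M]\subset[0,\infty)$, so Theorem \ref{Mond}(i) yields directly
\begin{equation*}
\langle Tx,x\rangle^r=f(\langle Tx,x\rangle)\leq\langle f(T)x,x\rangle=\langle T^r x,x\rangle.
\end{equation*}
For part (ii), with $0<r<1$ the map $t\mapsto t^r$ is concave on $[0,\infty)$, so $f(t)=-t^r$ is convex there; feeding this $f$ into Theorem \ref{Mond}(i) and then multiplying through by $-1$ reverses the inequality and gives $\langle T^r x,x\rangle\leq\langle Tx,x\rangle^r$.

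For part (iii), invertibility of the positive operator $T$ keeps $0$ out of the spectrum, so $m>0$ and $\sigma_S(T)\subset[m,M]\subset(0,\infty)$; on this interval $t\mapsto t^r$ with $r<0$ is continuous and convex, so the functional calculus applies and Theorem \ref{Mond}(i) reproduces the argument of part (i) verbatim, giving $\langle Tx,x\rangle^r\leq\langle T^r x,x\rangle$. I expect no substantial obstacle beyond bookkeeping: the mathematical content lies entirely in the Mond-Pe\v{c}ari\'{c} inequality, and the only points demanding care are confirming that $\langle Tx,x\rangle$ is a genuine non-negative real scalar so that the right-hand side power is meaningful, and, in part (iii), invoking invertibility to ensure that $t^r$ remains continuous on $\sigma_S(T)$ so that the functional calculus is legitimate.
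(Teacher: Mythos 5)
Your proof is correct and follows the standard route: the paper itself imports this theorem from \cite{Adv.} without proof, and the derivation there (as in the classical complex case) is exactly yours, namely applying the quaternionic Mond-Pe\v{c}ari\'{c} inequality to $f(t)=t^r$, using convexity for $r>1$ and $r<0$ (with invertibility forcing $m>0$ in the latter case) and concavity, via $-t^r$, for $0<r<1$. Your preliminary checks that $\langle Tx,x\rangle$ is a non-negative real number in $[m_T,M_T]$ are the right points of care and need no further elaboration.
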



\section{Main results}

A log-convex function $f$ is a positive real function for which $\ln f$ is a convex function.  In \cite{Adv.} some quaternionic operator inequalities were obtained using a convex function. In this section, some refinements are derived from applying a log-convex function for the inequalities in \cite{Adv.} and we emphasize that we follow \cite{Drag} and extend its results to the quaternionic case. Although the results seem to come easily, a careful reader has found that a long way has been traveled to acquire the necessary tools. 

\noindent The first result of this paper is obtained by substituting a positive continuous log-convex function for the continuous convex function $f$  in the quaternionic Mond-Pe\v{c}ari\'{c} inequality.
\begin{Theorem}
\label{Mondlog}
Let $T\in \mathfrak{B}(\mathsf{H})$ be a selfadjoint operator with $\sigma_S(T)\subset [m, M]$ for two real numbers $m<M$. If  $f:[m, M]\longrightarrow (0, +\infty)$ is a continuous log-convex function, then for each unit vector $x\in\mathsf{H}$,
\begin{equation*}
f(\langle Tx, x\rangle)\leq \exp \langle \ln f(T)x, x\rangle \leq \langle f(T)x, x\rangle.
\end{equation*}
\end{Theorem}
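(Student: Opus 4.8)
The plan is to reduce both inequalities to the quaternionic Mond-Pe\v{c}ari\'{c} inequality (Theorem~\ref{Mond}(i)), applied once to the convex function $\ln f$ and once to the convex function $\exp$. Write $g:=\ln f$, which is continuous and, by the log-convexity hypothesis, convex on $[m,M]$. Since $T$ is selfadjoint with $\sigma_S(T)\subset[m,M]$, Theorem~\ref{compact interval}(i) gives $m\le m_T\le\langle Tx,x\rangle\le M_T\le M$ for every unit vector $x$, so both $g$ and $f$ may legitimately be evaluated at the scalar $\langle Tx,x\rangle$, while $g(T)=\ln f(T)$ and $f(T)$ are the selfadjoint operators furnished by the functional calculus of Remark~\ref{remark}(e).

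For the left-hand inequality I would apply Theorem~\ref{Mond}(i) to $T$ with the convex function $g$, obtaining $g(\langle Tx,x\rangle)\le\langle g(T)x,x\rangle=\langle\ln f(T)x,x\rangle$, and then exponentiate. Because $\exp$ is increasing, this yields $\exp\bigl(g(\langle Tx,x\rangle)\bigr)\le\exp\langle\ln f(T)x,x\rangle$, and the left-hand side equals $f(\langle Tx,x\rangle)$ since $\exp\circ\ln=\mathrm{id}$ on $(0,+\infty)$ and $f>0$. Note that this step only involves the scalar identity $\exp(\ln f(t))=f(t)$, so no operator considerations intervene and the first inequality is settled.

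For the right-hand inequality I would instead treat $S:=\ln f(T)$ as a selfadjoint operator in its own right. Its spherical spectrum is contained in some compact interval $[m',M']$ (again by Theorem~\ref{compact interval}(i)), and $\exp$ is continuous and convex on all of $\mathbb{R}$, hence on $[m',M']$; so Theorem~\ref{Mond}(i) applied to $S$ with $\exp$ gives $\exp\langle Sx,x\rangle\le\langle\exp(S)x,x\rangle$. It then remains only to identify $\exp(S)=\exp\bigl(\ln f(T)\bigr)$ with $f(T)$, after which the inequality reads exactly $\exp\langle\ln f(T)x,x\rangle\le\langle f(T)x,x\rangle$, and chaining the two halves completes the proof.

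The one point requiring genuine care, and the step I expect to be the main obstacle, is this functional-calculus identity $\exp\bigl(\ln f(T)\bigr)=f(T)$, that is, the compatibility of the quaternionic continuous functional calculus with composition. I would derive it from Remark~\ref{remark}(e): for any polynomial $p$ the algebra-homomorphism property yields $p(\ln f(T))=(p\circ\ln f)(T)$, a purely algebraic identity. Approximating $\exp$ uniformly on the compact spectrum of $\ln f(T)$ by polynomials $p_n$, one has $p_n(\ln f(T))\to\exp(\ln f(T))$ on one side and $(p_n\circ\ln f)(T)\to(\exp\circ\ln f)(T)$ on the other, the latter convergence using the isometry $\Vert h(T)\Vert=\Vert h\Vert_\infty$ of Remark~\ref{remark}(e). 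Passing to the limit gives $\exp(\ln f(T))=(\exp\circ\ln f)(T)=f(T)$, which is precisely what the second inequality needs.
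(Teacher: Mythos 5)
Your proposal is correct and follows essentially the same route as the paper, which likewise obtains the left inequality by applying the quaternionic Mond--Pe\v{c}ari\'{c} inequality (Theorem~\ref{Mond}(i)) to the convex function $\ln f$ and the right inequality by applying it again to $\exp$ and the selfadjoint operator $\ln f(T)$ (deferring the details to Dragomir's Theorem 2). You simply make explicit the composition identity $\exp(\ln f(T))=f(T)$, which the paper leaves implicit.
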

\begin{proof}
Remembering that a log-convex function is a function $f:[m, M]\longrightarrow (0, +\infty)$ for which $\ln f$ is convex, then, Theorem \ref{Mond}. (i) provides the result. One may also see the proof of Theorem 2 in \cite{Drag}.
\end{proof}
Combining Theorem 7 in \cite{Adv.} and Theorem \ref{Mondlog}, the following result is obtained.
\begin{Corollary}\label{Mondlog1}
If $f:[m, M]\longrightarrow \mathbb{R}$ is a continuous log-convex function, and $T_j\in \mathfrak{B}(\mathsf{H})$ are selfadjoint operators with $\sigma_S(T_j)\subset [m, M], j\in\{1,...,n\}$  and $x_j\in\mathsf{H}$  with $\sum_{j=1}^n \Vert x_j\Vert ^2=1$, then
\begin{equation*}
f\left( \sum_{j=1}^n \langle T_j x_j, x_j \rangle \right) \leq \exp \Biggl \langle \sum_{j=1}^n \ln f(T_j)x_j, x_j \Biggr \rangle \leq \Biggl \langle \sum_{j=1}^n  f(T_j)x_j, x_j \Biggr\rangle.
\end{equation*}
\end{Corollary}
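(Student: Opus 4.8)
The plan is to deduce the $n$-operator chain of inequalities from the single-operator statement of Theorem \ref{Mondlog} by a direct-sum reduction; Theorem 7 in \cite{Adv.} is precisely the device that packages this reduction, so the two results combine cleanly. First I would assemble the finite direct sum $\widetilde{\mathsf{H}}:=\bigoplus_{j=1}^{n}\mathsf{H}$, which is again a right quaternionic Hilbert space under $\langle (u_j)_j,(v_j)_j\rangle=\sum_{j=1}^{n}\langle u_j,v_j\rangle$, together with the block-diagonal operator $\mathbf{T}:=\bigoplus_{j=1}^{n}T_j\in\mathfrak{B}(\widetilde{\mathsf{H}})$ defined by $\mathbf{T}(u_j)_j=(T_j u_j)_j$. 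Since each $T_j$ is selfadjoint, so is $\mathbf{T}$, and the normalisation hypothesis gives $\Vert\mathbf{x}\Vert^2=\sum_{j=1}^{n}\Vert x_j\Vert^2=1$ for the vector $\mathbf{x}:=(x_1,\dots,x_n)$, so $\mathbf{x}$ is a unit vector of $\widetilde{\mathsf{H}}$.

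Next I would locate the spherical spectrum of $\mathbf{T}$. Because $\Delta_q(\mathbf{T})=\bigoplus_{j=1}^{n}\Delta_q(T_j)$ for every $q\in\mathbb{H}$, and a finite block-diagonal operator is invertible in $\mathfrak{B}(\widetilde{\mathsf{H}})$ exactly when each block is invertible in $\mathfrak{B}(\mathsf{H})$, one gets $\rho_S(\mathbf{T})=\bigcap_{j}\rho_S(T_j)$ and hence $\sigma_S(\mathbf{T})=\bigcup_{j=1}^{n}\sigma_S(T_j)\subset[m,M]$. Thus $\mathbf{T}$ meets the hypotheses of Theorem \ref{Mondlog} on $\widetilde{\mathsf{H}}$. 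The remaining ingredient is the compatibility of the continuous functional calculus with the block structure, namely $g(\mathbf{T})=\bigoplus_{j=1}^{n}g(T_j)$ for every $g\in\mathcal{C}(\sigma_S(\mathbf{T}),\mathbb{R})$. This identity is transparent for real polynomials and propagates to all continuous $g$ by the density of polynomials in $\mathcal{C}(\sigma_S(\mathbf{T}),\mathbb{R})$ combined with the isometry and uniqueness of $\Phi_{\mathbf{T}}$ recorded in Remark \ref{remark}(e).

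With these facts in hand I would take $g\in\{f,\ln f\}$ (both continuous on $[m,M]\supset\sigma_S(\mathbf{T})$, the logarithm being available since $f>0$) to obtain the identities $\langle\mathbf{T}\mathbf{x},\mathbf{x}\rangle=\sum_{j}\langle T_j x_j,x_j\rangle$, $\langle\ln f(\mathbf{T})\mathbf{x},\mathbf{x}\rangle=\sum_{j}\langle\ln f(T_j)x_j,x_j\rangle$ and $\langle f(\mathbf{T})\mathbf{x},\mathbf{x}\rangle=\sum_{j}\langle f(T_j)x_j,x_j\rangle$. Applying Theorem \ref{Mondlog} to $\mathbf{T}$, $\mathbf{x}$ and $f$, and then rewriting each of the three resulting terms through these identities, delivers exactly the asserted inequality.

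The step I expect to be the real obstacle is the functional-calculus compatibility $g(\mathbf{T})=\bigoplus_j g(T_j)$. In the quaternionic setting $\Phi_{\mathbf{T}}$ is not given by an explicit spectral integral but is characterised only as the unique continuous homomorphism of real Banach algebras of Remark \ref{remark}(e), so one must argue through uniqueness and polynomial density rather than by direct computation; this is the content one expects to draw from Theorem 7 in \cite{Adv.}. Should that theorem instead be the $n$-operator Mond--Pe\v{c}ari\'{c} inequality itself, the same conclusion follows by applying it twice---once to the convex function $\ln f$ to obtain the left inequality after exponentiating, and once to the convex function $\exp$ applied to the operators $\ln f(T_j)$, using $\exp(\ln f(T_j))=f(T_j)$, to obtain the right inequality---so the argument is robust to the precise reading of that reference.
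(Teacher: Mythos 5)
Your argument is correct, but your primary route is not the one the paper takes. The paper gives no detailed proof: it simply states that the corollary follows by ``combining Theorem 7 in \cite{Adv.} and Theorem \ref{Mondlog}'', where Theorem 7 of \cite{Adv.} is the $n$-operator quaternionic Mond--Pe\v{c}ari\'{c} (Jensen) inequality $f\bigl(\sum_j\langle T_jx_j,x_j\rangle\bigr)\leq\sum_j\langle f(T_j)x_j,x_j\rangle$ for convex $f$. The intended argument is thus exactly the fallback you describe in your closing paragraph: apply that inequality once to the convex function $\ln f$ and exponentiate to get the left-hand inequality, and once to the convex function $\exp$ evaluated at the selfadjoint operators $\ln f(T_j)$ (whose spherical spectra lie in a common compact interval) to get the right-hand one, mirroring how Theorem \ref{Mondlog} itself is deduced from Theorem \ref{Mond}(i). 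Your main route instead reduces everything to the single-operator Theorem \ref{Mondlog} via the direct sum $\bigoplus_j\mathsf{H}$ and the block operator $\bigoplus_j T_j$; this is sound, and the supporting facts you flag --- $\sigma_S(\bigoplus_jT_j)=\bigcup_j\sigma_S(T_j)$ via $\Delta_q(\bigoplus_jT_j)=\bigoplus_j\Delta_q(T_j)$ (note $q+\bar q$ and $|q|^2$ are real, so $\Delta_q$ is a real polynomial in $T$ and respects the block structure), and $g(\bigoplus_jT_j)=\bigoplus_jg(T_j)$ by polynomial density plus the isometry and uniqueness of $\Phi_{\mathbf{T}}$ in Remark \ref{remark}(e) --- are all provable, but they are extra lemmas the paper never needs because the $n$-operator inequality is already available in \cite{Adv.}. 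What your route buys is self-containedness and a template that would also transfer the other single-operator refinements of the paper to $n$-tuples; what the paper's route buys is brevity, at the cost of leaning on the external reference. Your proposal as written is acceptable, though it would be cleaner to commit to one of the two arguments rather than hedging on the content of Theorem 7 of \cite{Adv.}.
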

\noindent Now, using quaternionic Mond-Pe\v{c}ari\'{c} inequality for the log-convex function $f(t)=t^{r}, r<0$, implies a refined version of Theorem \ref{Holder-McCarthy} (iii).
\begin{Proposition}
If $T\in\mathfrak{B}(\mathsf{H})$ is a positive and invertible operator, then for all $r<0$ and unit vector $x\in \mathsf{H}$
\begin{equation*}
\langle Tx, x \rangle ^{r} \leq \exp \langle \ln (T^{r})x, x\rangle \leq  \langle T^{r}x, x\rangle.
\end{equation*}
\end{Proposition}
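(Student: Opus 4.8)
The plan is to apply Theorem~\ref{Mondlog} directly to the function $f(t)=t^{r}$, so that the whole argument reduces to checking that the hypotheses of that theorem are satisfied and then translating its conclusion back into the language of the statement.

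First I would pin down the interval on which $f$ is to be considered. Since $T$ is positive, we have $\sigma_S(T)\subset[0,+\infty)$, and since $T$ is invertible, $0\notin\sigma_S(T)$; combined with Theorem~\ref{compact interval}(i) this gives $\sigma_S(T)\subset[m,M]$ with $m=\min\sigma_S(T)>0$ and $M=\max\sigma_S(T)$. On this interval $[m,M]\subset(0,+\infty)$ the map $f(t)=t^{r}$ is continuous and strictly positive. To verify log-convexity I would compute $(\ln f)(t)=r\ln t$, whose second derivative is $-r\,t^{-2}>0$ for $r<0$; hence $\ln f$ is convex and $f$ is a continuous positive log-convex function on $[m,M]$.

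Next I would invoke Theorem~\ref{Mondlog} with this $f$, obtaining for each unit vector $x$
\[
f(\langle Tx,x\rangle)\le \exp\langle \ln f(T)\,x,x\rangle \le \langle f(T)\,x,x\rangle .
\]
The remaining, and only delicate, step is to identify the three terms. Because $\langle Tx,x\rangle\in[m,M]$ for a unit vector $x$ (by the definition of $m_T,M_T$ in Theorem~\ref{compact interval}(i)), the left-hand term is genuinely $\langle Tx,x\rangle^{r}$. For the operator terms I would use the homomorphism property of the continuous functional calculus in Remark~\ref{remark}(e): we have $f(T)=T^{r}$, and since $\ln f$ is the composition $t\mapsto\ln(t^{r})$ and $T^{r}$ is positive, the homomorphism property yields $\ln f(T)=\ln(T^{r})$. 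Substituting these identities turns the displayed chain into
\[
\langle Tx,x\rangle^{r}\le \exp\langle \ln(T^{r})\,x,x\rangle \le \langle T^{r}x,x\rangle ,
\]
which is precisely the assertion.

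The step most worth care is confirming $m>0$: it is exactly the invertibility of $T$ that guarantees $0\notin\sigma_S(T)$, and without it the function $t^{r}$ with $r<0$ would blow up at the left endpoint, so that neither the positivity nor the log-convexity needed to apply Theorem~\ref{Mondlog} would hold. Once $m>0$ is secured, the argument is a single clean application of the quaternionic Mond--Pe\v{c}ari\'{c} inequality for log-convex functions together with the composition rule of the functional calculus.
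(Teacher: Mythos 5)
Your proposal is correct and follows exactly the route the paper intends: the text preceding the Proposition states that it follows from the quaternionic Mond--Pe\v{c}ari\'{c} inequality (Theorem~\ref{Mondlog}) applied to the log-convex function $f(t)=t^{r}$, $r<0$, which is precisely your argument. Your additional care in checking $m=\min\sigma_S(T)>0$ via invertibility and verifying log-convexity of $t\mapsto t^r$ only makes explicit what the paper leaves implicit.
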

\noindent Quaternionic version of Theorem 5 in \cite{Drag} which improves quaternionic Lah-Ribari\v{c} inequality, is proved similarly.
\begin{Theorem}\label{Lah}
Let $T\in \mathfrak{B}(\mathsf{H})$ be a selfadjoint operator with $\sigma_S(T)\subset [m, M]$ for two real numbers $m<M$. If $f:[m, M]\longrightarrow (0, +\infty)$ is  log-convex, then for each unit vector $x\in\mathsf{H}$
\begin{itemize}
\item[(i)]$\langle f(T)x, x \rangle \leq \Biggl\langle \Biggl [ [f(m)]^{\frac{MI-T}{M-m}} [f(M)]^{\frac{T-m I}{M-m}} \Bigg ]x , x\Biggr\rangle\leq \dfrac{M-\langle Tx, x\rangle}{M-m}f(m)+\dfrac{\langle Tx, x\rangle -m}{M-m}f(M),$
\item[(ii)]$f(\langle Tx, x \rangle) \leq [f(m)]^{\frac{M-\langle Tx,x \rangle}{M-m}} [f(M)]^{\frac{\langle Tx,x \rangle -m}{M-m}}\leq \Biggl\langle \Biggl [ [f(m)]^{\frac{MI-T}{M-m}} [f(M)]^{\frac{T-m I}{M-m}} \Bigg ]x , x\Biggr\rangle.$
\end{itemize}
\end{Theorem}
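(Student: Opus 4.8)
The plan is to reduce both parts to two elementary scalar inequalities and then lift them to the operator level using the positivity of the continuous functional calculus and the already-proved quaternionic Mond-Pe\v{c}ari\'{c} inequality. Writing each $t\in[m,M]$ as the convex combination $t=\frac{M-t}{M-m}m+\frac{t-m}{M-m}M$, convexity of $\ln f$ yields the pointwise bound
\begin{equation*}
f(t)\leq [f(m)]^{\frac{M-t}{M-m}}[f(M)]^{\frac{t-m}{M-m}}=:g(t),
\end{equation*}
while the weighted arithmetic--geometric mean inequality for the positive numbers $f(m),f(M)$ gives
\begin{equation*}
g(t)\leq \frac{M-t}{M-m}f(m)+\frac{t-m}{M-m}f(M)=:h(t).
\end{equation*}
Both $g$ and $h$ are continuous and real-valued on $[m,M]$, hence belong to $\mathcal{C}(\sigma_S(T),\mathbb{R})$.

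To prove (i), I would note that $f\leq g\leq h$ on $\sigma_S(T)\subset[m,M]$, so the positivity of $\Phi_T$ (Remark \ref{remark}(e)(iii), equivalently the order-preservation recorded in the following Remark) gives $f(T)\leq g(T)\leq h(T)$. Evaluating in the unit state $x$ and using $\langle x,x\rangle=1$ together with $\langle(MI-T)x,x\rangle=M-\langle Tx,x\rangle$ and $\langle(T-mI)x,x\rangle=\langle Tx,x\rangle-m$ turns $\langle h(T)x,x\rangle$ into the affine right-hand side of (i).

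For (ii), set $s:=\langle Tx,x\rangle$; since $mI\leq T\leq MI$ forces $m\leq s\leq M$ for a unit vector, the left inequality is just $f(s)\leq g(s)$, the scalar log-convexity bound read at the point $s$. For the right inequality I would observe that $\ln g(t)$ is affine in $t$, so $g$ is a continuous convex function on $[m,M]$; applying the quaternionic Mond-Pe\v{c}ari\'{c} inequality (Theorem \ref{Mond}(i)) to $g$ delivers $g(\langle Tx,x\rangle)\leq\langle g(T)x,x\rangle$, which is exactly $g(s)\leq\langle g(T)x,x\rangle$.

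The one point needing care --- and the main obstacle --- is identifying $g(T):=\Phi_T(g)$ with the product $[f(m)]^{\frac{MI-T}{M-m}}[f(M)]^{\frac{T-mI}{M-m}}$ written in the statement. Here $[f(m)]^{\frac{MI-T}{M-m}}$ is to be read as $\Phi_T$ applied to $t\mapsto[f(m)]^{\frac{M-t}{M-m}}$, and similarly for the second factor; these two operators are functions of the single operator $T$ and hence commute, so because $\Phi_T$ is a unital algebra homomorphism (Remark \ref{remark}(e)) their product equals $\Phi_T$ of the product of the underlying scalar functions, namely $g$. Once this identification is in place, the remaining steps invoke only the two scalar inequalities above, functional-calculus positivity, and the established Mond-Pe\v{c}ari\'{c} inequality, so no new difficulty arises.
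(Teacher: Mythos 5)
Your proposal is correct and follows essentially the same route the paper intends: the paper gives no independent proof, deferring to Theorem 5 of \cite{Drag}, whose argument is precisely your reduction to the two scalar inequalities (log-convexity giving $f\leq g$, weighted AM--GM giving $g\leq h$), lifted by the positivity of the functional calculus for (i), combined with the convexity of $g$ and the quaternionic Mond-Pe\v{c}ari\'{c} inequality for (ii). Your explicit remark identifying the operator product with $\Phi_T(g)$ via the homomorphism property is a point the paper leaves implicit, but it introduces no deviation in method.
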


\noindent It is easy to show that Corollaries 3 and 4 of \cite{Drag} are valid in quaternionic case. The following theorem is the quaternionic version of Proposition 2 of \cite{Drag}, with the same argument. 
\begin{Theorem}
Let $T\in \mathfrak{B}(\mathsf{H})$ be a positive selfadjoint operator with $\sigma_S(T)\subset [m, M]$ for two real numbers $0<m<M$. If $T$ is invertible, then for each unit vector $x\in\mathsf{H}$ and $r<0$
\begin{itemize}
\item[(i)]$\langle T^r x, x \rangle \leq \Biggl\langle \Biggl [ m^{\frac{MI-T}{M-m}} M^{\frac{T-m I}{M-m}} \Bigg ]^rx , x\Biggr\rangle\leq \dfrac{M-\langle Tx, x\rangle}{M-m}m^r+\dfrac{\langle Tx, x\rangle -m}{M-m}M^r,$
\item[(ii)]$\langle Tx, x \rangle^r \leq \Biggl [ f(m)^{\frac{M-\langle Tx,x \rangle}{M-m}} f(M)^{\frac{\langle Tx,x \rangle -m}{M-m}} \Biggr ]^r \leq \Biggl\langle \Biggl [ m^{\frac{MI-T}{M-m}} M^{\frac{T-m I}{M-m}} \Bigg ]^r x , x\Biggr\rangle.$
\end{itemize}
\end{Theorem}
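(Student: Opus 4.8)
The plan is to obtain this theorem as the specialization of Theorem \ref{Lah} to the single function $f(t)=t^{r}$ with $r<0$, exactly as Proposition 2 of \cite{Drag} is deduced from Theorem 5 of \cite{Drag}.

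First I would check that $f(t)=t^{r}$ satisfies the hypotheses of Theorem \ref{Lah} on $[m,M]$. Since $0<m\le t\le M$, we have $f(t)=t^{r}>0$, so $f$ maps $[m,M]$ into $(0,+\infty)$ and is continuous there. Moreover $\ln f(t)=r\ln t$ is convex on $[m,M]$: the function $\ln t$ is concave and $r<0$, so multiplying by $r$ turns concavity into convexity. Hence $f$ is a continuous, strictly positive, log-convex function, and all hypotheses of Theorem \ref{Lah} hold, the positivity and invertibility of $T$ being automatic from $\sigma_S(T)\subset[m,M]$ with $m>0$.

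Next I would insert $f(m)=m^{r}$ and $f(M)=M^{r}$ into the two chains of Theorem \ref{Lah}. The only genuine computation is to identify the middle term. Writing $a^{\frac{MI-T}{M-m}}$ for $\Phi_T$ applied to $t\mapsto a^{(M-t)/(M-m)}$ (and similarly for the other exponent), both $(m^{r})^{\frac{MI-T}{M-m}}(M^{r})^{\frac{T-mI}{M-m}}$ and $\bigl[m^{\frac{MI-T}{M-m}}M^{\frac{T-mI}{M-m}}\bigr]^{r}$ are, by the homomorphism property of $\Phi_T$ (Remark \ref{remark}(e)) together with the composition property of the continuous functional calculus, equal to $\Phi_T(g)$ for the single continuous function
$$g(t)=m^{\,r\frac{M-t}{M-m}}\,M^{\,r\frac{t-m}{M-m}};$$
here I use that functions of the one selfadjoint operator $T$ commute and that, for positive scalars, $(m^{a})^{r}(M^{b})^{r}=(m^{a}M^{b})^{r}$. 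The same scalar identity rewrites the middle term of part (ii) as $\bigl[m^{\frac{M-\langle Tx,x\rangle}{M-m}}M^{\frac{\langle Tx,x\rangle-m}{M-m}}\bigr]^{r}$ (the symbols $f(m),f(M)$ in the displayed statement of (ii) should read $m,M$). With these substitutions the three-term inequalities of Theorem \ref{Lah}(i) and (ii) become precisely parts (i) and (ii) of the present theorem.

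There is no serious obstacle here; the statement is a direct corollary. The one point deserving a line of care is that $\bigl[m^{\frac{MI-T}{M-m}}M^{\frac{T-mI}{M-m}}\bigr]^{r}$ is well defined for $r<0$: the underlying scalar function $g_0(t)=m^{\frac{M-t}{M-m}}M^{\frac{t-m}{M-m}}$ is the weighted geometric mean of $m$ and $M$, hence takes values in $[m,M]$ for $t\in\sigma_S(T)\subset[m,M]$, so $g_0(T)$ is positive and bounded below by $mI>0$, and therefore invertible, which makes its negative power legitimate via the continuous functional calculus.
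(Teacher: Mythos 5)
Your proof is correct and follows exactly the route the paper intends: the paper supplies no separate argument, saying only that the theorem is the quaternionic version of Proposition 2 of \cite{Drag} ``with the same argument,'' i.e., it is the specialization of Theorem \ref{Lah} to the log-convex function $f(t)=t^{r}$, $r<0$, which is precisely what you carry out (including the justification that $\ln t^{r}=r\ln t$ is convex for $r<0$ and the identification of the operator middle term via the functional calculus). Your observation that the symbols $f(m)$, $f(M)$ in part (ii) are a typo for $m$, $M$ is also correct.
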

\noindent Theorem 2.1 of \cite{Drag2} and Proposition 3 of \cite{Drag} can be deduced with the same procedure in the quaternionic setting, stated in the following theorem.
\begin{Theorem}\label{inequalty'}
Let $J$ be an interval and $f:J\rightarrow \mathbb{R}$ be a differentiable function on $\mathring{J}$ (the interior of $J$), whose derivative $f'$ is continuous on $\mathring{J}$. If $T\in \mathfrak{B}(\mathsf{H})$ is a selfadjoint operator with $\sigma_S(T)\subseteq [m, M] \subset \mathring{J}$ for two real numbers $m<M$, then for every unit vector $x\in\mathsf{H}$
\begin{itemize}
\item[(i)] if $f$ is a convex function on $\mathring{J}$
\begin{equation*} 0\leq \langle f(T)x, x \rangle -f (\langle Tx, x\rangle) \leq \langle f'(T)Tx, x \rangle -\langle Tx, x\rangle \langle f'(T)x, x \rangle, 
\end{equation*}
\item[(ii)] if $f$ is a positive log-convex function on $\mathring{J}$
\begin{equation*}
1\leq \dfrac{\exp \langle \ln f(T)x, x \rangle}{f (\langle Tx, x\rangle)} \leq \exp [ \langle f'(T)[f(T)]^{-1}Tx, x \rangle -\langle Tx, x\rangle \langle f'(T)[f(T)^{-1}]x, x \rangle ]. 
\end{equation*}
\end{itemize}
\end{Theorem}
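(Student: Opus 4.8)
The plan is to reduce both parts to a single scalar gradient inequality for convex functions and then transport it to operators via the continuous functional calculus of Remark \ref{remark}(e), together with the order-preserving property recorded in the remark that follows it. For part (i), the left-hand inequality is nothing other than the quaternionic Mond-Pe\v{c}ari\'{c} inequality of Theorem \ref{Mond}(i): convexity of $f$ gives $f(\langle Tx,x\rangle)\leq\langle f(T)x,x\rangle$ at once. So the real content of (i) is the right-hand bound.

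For that bound, I would start from the fact that a differentiable convex $f$ lies above each of its tangent lines, i.e.
\[
f(s)\geq f(t)+f'(t)(s-t)\qquad\text{for all }s,t\in[m,M],
\]
which upon rearranging (with $s$ and $t$ interchanged in the roles) yields $f(t)-f(s)\leq f'(t)(t-s)=tf'(t)-s\,f'(t)$. Now I would fix the scalar $s:=\langle Tx,x\rangle\in[m,M]$ and read both sides as continuous functions of $t\in\sigma_S(T)$; since $f'$ is continuous, $t\mapsto tf'(t)$ and $t\mapsto f'(t)$ belong to $\mathcal{C}(\sigma_S(T),\mathbb{R})$, and the order-preserving functional calculus upgrades the pointwise inequality to the operator inequality
\[
f(T)-f(\langle Tx,x\rangle)\,I\ \leq\ f'(T)T-\langle Tx,x\rangle\, f'(T),
\]
where $f'(T)T=(tf'(t))(T)$ and all three operators, being functions of the single selfadjoint $T$, commute. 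Pairing with the unit vector $x$ and using $\langle x,x\rangle=1$ gives exactly the claimed upper bound, completing (i).

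For part (ii), I would apply part (i) to the convex function $g:=\ln f$. Because $f>0$ is continuous on the compact set $\sigma_S(T)$, one has $f(T)\geq\delta I>0$ for some $\delta$ by the positivity in Remark \ref{remark}(e)(iii), so $f(T)$ is invertible; and since $\Phi_T$ is an algebra homomorphism, $g'(T)=(f'/f)(T)=f'(T)[f(T)]^{-1}$. Thus part (i) applied to $g$ reads
\[
0\leq \langle \ln f(T)x,x\rangle-\ln f(\langle Tx,x\rangle)\leq \langle f'(T)[f(T)]^{-1}Tx,x\rangle-\langle Tx,x\rangle\langle f'(T)[f(T)]^{-1}x,x\rangle.
\]
Exponentiating the left inequality gives $1\leq \exp\langle\ln f(T)x,x\rangle/f(\langle Tx,x\rangle)$, while exponentiating the right inequality, after noting that its left side is precisely $\ln$ of that same ratio, yields the stated upper bound and finishes (ii).

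The one place that demands care — really the only nonroutine step — is the transfer from the scalar inequality to the operator inequality: one must confirm that $tf'(t)$ and $f'(t)$ lie in $\mathcal{C}(\sigma_S(T),\mathbb{R})$ and then invoke both the homomorphism property (so that functional calculus respects the products $f'(T)T$ and $f'(T)[f(T)]^{-1}$) and the positivity property (so that the scalar ordering passes to operators). Everything else is formal manipulation of inner products with $\langle x,x\rangle=1$ and monotonicity of the exponential.
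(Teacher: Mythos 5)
Your argument is correct and is essentially the procedure the paper invokes (it defers to Theorem 2.1 of \cite{Drag2} and Proposition 3 of \cite{Drag}): the gradient inequality for a differentiable convex function is transported to an operator inequality via the positive functional calculus of Remark \ref{remark}(e) and then paired with the unit vector, and part (ii) follows by applying part (i) to $\ln f$ with $(\ln f)'(T)=f'(T)[f(T)]^{-1}$. No gaps; your care about $\langle Tx,x\rangle\in[m,M]$ and the invertibility of $f(T)$ covers the only delicate points.
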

Substituting the log-convex differentiable function $f(t)=t^{-r}, t>0, r>0$ for part (ii) in Theorem \ref{inequalty'}, we obtain the following corollary.
\begin{Corollary}
Let $T\in \mathfrak{B}(\mathsf{H})$ be a positive selfadjoint operator with $\sigma_S(T)\subset [m, M]$ for two real numbers $0<m<M$. If $T$ is invertible, then for each unit vector $x\in\mathsf{H}$ and $r>0$, 
\begin{equation*}
\langle Tx, x \rangle^r \exp \langle\ln (T^{-r}) x,x\rangle \leq \exp [ r(\langle Tx,x\rangle \langle T^{-1}x,x\rangle -1)].
\end{equation*}
\end{Corollary}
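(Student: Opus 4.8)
The plan is to apply part (ii) of Theorem~\ref{inequalty'} with the specific choice $f(t)=t^{-r}$ and then to simplify the resulting operator expressions by means of the continuous functional calculus recorded in Remark~\ref{remark}(e).

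First I would verify that $f$ satisfies the hypotheses of Theorem~\ref{inequalty'}(ii) on a suitable interval. Since $0<m<M$, I choose an open interval $\mathring{J}$ with $[m,M]\subset \mathring{J}\subset (0,+\infty)$; on $\mathring{J}$ the function $f(t)=t^{-r}$ is positive, differentiable with continuous derivative $f'(t)=-r\,t^{-r-1}$, and $\ln f(t)=-r\ln t$ has second derivative $r/t^{2}>0$, so $f$ is log-convex. Thus the hypotheses hold, and the operators $f(T)$, $f'(T)$, $\ln f(T)$ are all well defined via $\Phi_T$, because $\sigma_S(T)\subset[m,M]$ avoids $0$.

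Next I would evaluate each ingredient. Since $T$ is selfadjoint, $\langle Tx,x\rangle$ is a real number lying in $[m,M]$ by Theorem~\ref{compact interval}(i), so $f(\langle Tx,x\rangle)=\langle Tx,x\rangle^{-r}$, whose reciprocal contributes the factor $\langle Tx,x\rangle^{r}$. The term $\exp\langle \ln f(T)x,x\rangle$ equals $\exp\langle \ln(T^{-r})x,x\rangle$, since $\ln f(t)=\ln(t^{-r})$. The crucial simplification concerns the upper bound: as $\Phi_T$ is an algebra homomorphism, the function $f'(t)[f(t)]^{-1}=-r\,t^{-r-1}\cdot t^{r}=-r/t$ yields $f'(T)[f(T)]^{-1}=-r\,T^{-1}$. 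Consequently $f'(T)[f(T)]^{-1}T=-rI$, and, using $\Vert x\Vert=1$, one obtains $\langle f'(T)[f(T)]^{-1}Tx,x\rangle=-r$ together with $\langle Tx,x\rangle\langle f'(T)[f(T)]^{-1}x,x\rangle=-r\langle Tx,x\rangle\langle T^{-1}x,x\rangle$. Subtracting, the exponent on the right-hand side of Theorem~\ref{inequalty'}(ii) collapses to $r(\langle Tx,x\rangle\langle T^{-1}x,x\rangle-1)$.

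Finally I would substitute these evaluations into the right-hand inequality of Theorem~\ref{inequalty'}(ii) to recover exactly the claimed estimate. The only delicate point, and the main obstacle, is the identity $f'(T)[f(T)]^{-1}=-r\,T^{-1}$: it relies on $T$ being invertible with $0\notin\sigma_S(T)$, so that $t\mapsto 1/t$ and $t\mapsto t^{r}$ are genuinely continuous on $\sigma_S(T)$, and on the fact that $\Phi_T$ respects products, which is precisely the homomorphism property of Remark~\ref{remark}(e). Everything else reduces to routine substitution.
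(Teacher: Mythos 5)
Your proposal is correct and follows exactly the route the paper intends: substituting the log-convex function $f(t)=t^{-r}$ into part (ii) of Theorem~\ref{inequalty'} and simplifying $f'(T)[f(T)]^{-1}=-rT^{-1}$ via the functional calculus, which collapses the exponent to $r(\langle Tx,x\rangle\langle T^{-1}x,x\rangle-1)$. The paper gives no further detail beyond this substitution, so your verification of the hypotheses and the explicit computation of each term is, if anything, more complete than the original.
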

The following result is not only a refinement but also a reverse for the multiplicative version of Jensen's inequality,  with the same proof as Theorem 3.2 in \cite{Drag}. \begin{Theorem}\label{inequality''}
Let $J$ be an interval and $f:J\rightarrow (0, +\infty)$ be a log-convex  differentiable function on $\mathring{J}$ (the interior of $J$), whose derivative $f'$ is continuous on $\mathring{J}$. If $T\in \mathfrak{B}(\mathsf{H})$ is a selfadjoint operator with $\sigma_S(T)\subseteq [m, M] \subset \mathring{J}$ for two real numbers $m<M$, then for every unit vector $x\in\mathsf{H}$
\begin{eqnarray*}
1 &\leq & \Biggl \langle \exp \Biggl [\dfrac{f'(\langle Tx,x\rangle)}{f(\langle Tx,x\rangle)}(T- \langle Tx, x\rangle I) \Bigg ]x,x \Bigg \rangle \leq \dfrac{\langle f(T)x,x\rangle}{f(\langle Tx,x \rangle)}\\
&\leq & \Biggl \langle \exp \Biggl [f'(T) [f(T)]^{-1} (T-\langle Tx, x\rangle I) \Bigg ]x,x \Bigg \rangle.
\end{eqnarray*}
\end{Theorem}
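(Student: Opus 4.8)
The plan is to reduce the entire chain of inequalities to the convexity of $g := \ln f$, combined with the positivity of the continuous functional calculus $\Phi_T$ from Remark \ref{remark} and the quaternionic Mond-Pe\v{c}ari\'{c} inequality of Theorem \ref{Mond}(i). First I would record the preliminary facts that make every term meaningful. Since $\sigma_S(T)\subseteq[m,M]$, Theorem \ref{compact interval}(i) gives $\bar t := \langle Tx,x\rangle \in [m_T,M_T]\subseteq[m,M]$, so $f(\bar t)$ and $f'(\bar t)$ are defined and $f(\bar t)>0$. Because $f$ is continuous and strictly positive on the compact interval $[m,M]$, it is bounded below by some $\delta>0$ there, so by the positivity and isometry of $\Phi_T$ the operator $f(T)$ is positive and invertible; hence $[f(T)]^{-1}$ exists and the operator $\exp\!\big[f'(T)[f(T)]^{-1}(T-\bar t I)\big]$ is well defined, all of its factors being functions of the single selfadjoint operator $T$ and therefore mutually commuting.

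Next I would extract two tangent-line inequalities from the convexity of $g=\ln f$, whose derivative is $g'=f'/f$. For every $t\in[m,M]$ convexity yields both
\[
g(t)-g(\bar t)\ \geq\ g'(\bar t)(t-\bar t)
\qquad\text{and}\qquad
g(t)-g(\bar t)\ \leq\ g'(t)(t-\bar t).
\]
Exponentiating each (monotonicity of $\exp$) turns these into the scalar estimates
\[
\exp\!\left[\frac{f'(\bar t)}{f(\bar t)}(t-\bar t)\right]\ \leq\ \frac{f(t)}{f(\bar t)}\ \leq\ \exp\!\left[\frac{f'(t)}{f(t)}(t-\bar t)\right],\qquad t\in\sigma_S(T),
\]
in which $\bar t$ and $f(\bar t)$ are fixed scalars. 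Both outer members are continuous real-valued functions of $t$ on $\sigma_S(T)\subset\mathbb{R}$ (using Remark \ref{remark}(d)), so $\Phi_T$ applies to them.

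I would then apply the positivity of $\Phi_T$ (Remark \ref{remark}) to the left half of the displayed scalar inequality and pair with the unit vector $x$, obtaining $\langle \exp[\frac{f'(\bar t)}{f(\bar t)}(T-\bar t I)]x,x\rangle \leq \frac{\langle f(T)x,x\rangle}{f(\bar t)}$, which is the middle inequality; applying it likewise to the right half gives $\frac{\langle f(T)x,x\rangle}{f(\bar t)} \leq \langle \exp[f'(T)[f(T)]^{-1}(T-\bar t I)]x,x\rangle$, the rightmost inequality. For the leftmost inequality I would instead invoke Theorem \ref{Mond}(i) directly for the genuinely convex function $t\mapsto \exp[c(t-\bar t)]$ with $c=f'(\bar t)/f(\bar t)$; Mond-Pe\v{c}ari\'{c} gives $\exp[c(\langle Tx,x\rangle-\bar t)]\leq \langle \exp[c(T-\bar t I)]x,x\rangle$, and since $\langle Tx,x\rangle=\bar t$ the left side equals $1$, yielding $1\leq \langle \exp[\frac{f'(\bar t)}{f(\bar t)}(T-\bar t I)]x,x\rangle$.

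The one genuine care-point — and the place where the quaternionic setting must be handled honestly rather than by analogy — is the passage from the scalar inequalities to operator inequalities. I must verify that the composed function $t\mapsto \exp\!\big[(f'(t)/f(t))(t-\bar t)\big]$ lies in $\mathcal{C}(\sigma_S(T),\mathbb{R})$ (it does, since $f'$ is continuous and $f>0$, and $\sigma_S(T)\subset\mathbb{R}$), so that $\Phi_T$ is applicable, and that the homomorphism property of $\Phi_T$ really identifies its image with $\exp\!\big[f'(T)[f(T)]^{-1}(T-\bar t I)\big]$. This identification rests on the mutual commutativity of $f'(T)$, $[f(T)]^{-1}$ and $T-\bar t I$, all being images of the one operator $T$ under the algebra homomorphism $\Phi_T$. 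I expect no further obstacle beyond this bookkeeping, so the argument parallels that of Theorem 3.2 in \cite{Drag}.
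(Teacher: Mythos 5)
Your proposal is correct and follows essentially the same route the paper intends: the paper proves this theorem by declaring it has "the same proof as Theorem 3.2 in \cite{Drag}", and that proof is precisely your argument — the two gradient inequalities for the convex function $\ln f$ at and away from the point $\bar t=\langle Tx,x\rangle$, exponentiation, transfer to operators via the positivity of the functional calculus, and the Mond--Pe\v{c}ari\'{c} inequality applied to $t\mapsto\exp[c(t-\bar t)]$ for the leftmost bound. Your extra care in checking that $\bar t\in[m,M]\subset\mathring J$, that $f(T)$ is invertible, and that the commuting-functions-of-$T$ identification under $\Phi_T$ is legitimate in the quaternionic setting is exactly the bookkeeping the paper leaves implicit.
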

Again, considering the  log-convex differentiable function $f(t)=t^{-r}, t>0, r>0$ for  Theorem \ref{inequality''}, we obtain the following corollary.
\begin{Corollary}
Let $T\in \mathfrak{B}(\mathsf{H})$ be a positive selfadjoint operator with $\sigma_S(T)\subset [m, M]$ for two real numbers $0<m<M$. If $T$ is invertible, then for each unit vector $x\in\mathsf{H}$ and $r>0$, 
\begin{eqnarray*}
1 &\leq & \Biggl \langle \exp [r(I- \langle Tx, x \rangle^{-1}T)]x,x \Bigg \rangle \leq  \langle T^{-r} x,x\rangle \langle Tx,x \rangle ^{r} \\
&\leq &\Biggl \langle \exp [ r(I- \langle Tx,x\rangle  T^{-1})]x,x \Bigg \rangle.
\end{eqnarray*}
\end{Corollary}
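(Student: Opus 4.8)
The plan is to specialize Theorem \ref{inequality''} to the function $f(t)=t^{-r}$ with $J=(0,+\infty)$ and then to simplify the three resulting expressions. First I would verify the hypotheses: since $T$ is positive and invertible with $\sigma_S(T)\subset[m,M]$ and $0<m<M$, we have $[m,M]\subset(0,+\infty)=\mathring{J}$, and by Theorem \ref{compact interval}(i) every value $\langle Tx,x\rangle$ for a unit vector $x$ also lies in $[m,M]$. The function $f(t)=t^{-r}$ is positive and continuously differentiable on $(0,+\infty)$ with $f'(t)=-rt^{-r-1}$, and $\ln f(t)=-r\ln t$ is convex since $(\ln f)''(t)=r/t^{2}>0$ for $r>0$; hence $f$ is a positive log-convex differentiable function, so Theorem \ref{inequality''} applies. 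The continuous functional calculus of Remark \ref{remark}(e) makes $f(T)=T^{-r}$, $[f(T)]^{-1}=T^{r}$ and $f'(T)=-rT^{-r-1}$ well-defined selfadjoint operators.

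Next I would compute each ingredient of the chain in Theorem \ref{inequality''}. The central ratio becomes
\[
\frac{\langle f(T)x,x\rangle}{f(\langle Tx,x\rangle)}=\frac{\langle T^{-r}x,x\rangle}{\langle Tx,x\rangle^{-r}}=\langle T^{-r}x,x\rangle\,\langle Tx,x\rangle^{r},
\]
which is precisely the middle quantity of the corollary, and the leftmost bound $1\le(\cdots)$ carries over unchanged. For the left exponential the scalar logarithmic derivative is $f'(\langle Tx,x\rangle)/f(\langle Tx,x\rangle)=-r\langle Tx,x\rangle^{-1}$, whence
\[
\frac{f'(\langle Tx,x\rangle)}{f(\langle Tx,x\rangle)}\bigl(T-\langle Tx,x\rangle I\bigr)=-r\langle Tx,x\rangle^{-1}\bigl(T-\langle Tx,x\rangle I\bigr)=r\bigl(I-\langle Tx,x\rangle^{-1}T\bigr),
\]
giving the first exponential in the statement. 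For the right exponential the operator coefficient reduces to $f'(T)[f(T)]^{-1}=(-rT^{-r-1})T^{r}=-rT^{-1}$, so that
\[
f'(T)[f(T)]^{-1}\bigl(T-\langle Tx,x\rangle I\bigr)=-rT^{-1}\bigl(T-\langle Tx,x\rangle I\bigr)=-r\bigl(I-\langle Tx,x\rangle T^{-1}\bigr).
\]
Inserting these three evaluations into Theorem \ref{inequality''} and applying $\langle\,\cdot\,x,x\rangle$ throughout yields the asserted string of inequalities.

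The computation is essentially routine, so the only place that requires genuine care is the last simplification. Unlike the scalar factor in the left exponential, the coefficient $f'(T)[f(T)]^{-1}$ is an operator; one must cancel the commuting functional-calculus factors in $(-rT^{-r-1})T^{r}=-rT^{-1}$, then expand $T^{-1}(T-\langle Tx,x\rangle I)=I-\langle Tx,x\rangle T^{-1}$, and finally track the sign contributed by $f'<0$. I would confirm this sign on a diagonal test operator before committing to it; doing so indicates that the exponent of the rightmost term is $-r\bigl(I-\langle Tx,x\rangle T^{-1}\bigr)$, equivalently $r\bigl(\langle Tx,x\rangle T^{-1}-I\bigr)$. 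With the three substitutions established, the remaining inequalities follow immediately from Theorem \ref{inequality''}.
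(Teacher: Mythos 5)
Your route is the same as the paper's: the paper obtains this corollary purely by substituting the log-convex function $f(t)=t^{-r}$ into Theorem \ref{inequality''}, which is exactly what you do, and your verification of the hypotheses and your three evaluations are correct. The important point is the sign you isolate at the end. Since $f'(T)[f(T)]^{-1}=(-rT^{-r-1})T^{r}=-rT^{-1}$, the rightmost exponent is
\[
-rT^{-1}\bigl(T-\langle Tx,x\rangle I\bigr)=r\bigl(\langle Tx,x\rangle T^{-1}-I\bigr),
\]
which is the \emph{negative} of the exponent $r(I-\langle Tx,x\rangle T^{-1})$ appearing in the corollary as printed. The printed version is in fact false, not merely rewritten: take $T=\mathrm{diag}(1,2)$ on $\mathbb{H}^{2}$, $x=(1/\sqrt{2},1/\sqrt{2})$ and $r=1$; then the middle term equals $\langle T^{-1}x,x\rangle\langle Tx,x\rangle=9/8$, while $\bigl\langle \exp[I-\tfrac{3}{2}T^{-1}]x,x\bigr\rangle=\tfrac{1}{2}\bigl(e^{-1/2}+e^{1/4}\bigr)\approx 0.945<9/8$, so the printed third inequality fails; with your sign the bound is $\tfrac{1}{2}\bigl(e^{1/2}+e^{-1/4}\bigr)\approx 1.214\geq 9/8$, as required. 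So what you have is a correct proof of the corrected statement, and it shows that the corollary's last exponential should read $\exp[r(\langle Tx,x\rangle T^{-1}-I)]$. The only flaw in your write-up is internal: midway you claim that inserting the three evaluations ``yields the asserted string of inequalities,'' which contradicts your own (correct) final computation of the rightmost exponent; you should instead state plainly that the statement as printed carries a sign misprint in its last term and that your argument establishes the corrected version.
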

The next theorem is the quaternionic version of Theorem 3.3 in \cite{Drag}, with similar proof.
\begin{Theorem}\label{inequality'''}
Let $J$ be an interval and $f:J\rightarrow (0, +\infty)$ be a log-convex  differentiable function on $\mathring{J}$ (the interior of $J$), whose derivative $f'$ is continuous on $\mathring{J}$. If $T\in \mathfrak{B}(\mathsf{H})$ is a selfadjoint operator with $\sigma_S(T)\subseteq [m, M] \subset \mathring{J}$ for two real numbers $m<M$, then 
\begin{eqnarray*}
1& \leq & \dfrac{\Biggl \langle [f(M)]^{{\frac{T-m1_{\mathsf{H}}}{M-m}}}[f(m)]^{\frac{MI-T}{M-m}}x,x \Bigg \rangle}{\langle f(T)x,x \rangle}\\
&\leq & \dfrac{\Biggl \langle f(T)\exp \Biggl [\dfrac{(MI-T)(T-mI)}{M-m}\biggl(\dfrac{f'(M)}{f(M)}-\dfrac{f'(m)}{f(m)}\bigg) \Bigg ] x, x \Bigg \rangle}{\langle f(T)x, x\rangle}\\
&\leq & \exp \Biggl [\dfrac{1}{4}(M-m)\biggl(\dfrac{f'(M)}{f(M)}-\dfrac{f'(m)}{f(m)}\bigg)\Bigg ],
\end{eqnarray*}
for every unit vector $x\in\mathsf{H}$.
\end{Theorem}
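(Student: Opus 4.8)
The plan is to compress the entire three-term chain into three pointwise inequalities on $[m,M]$ and then to promote each of them to an operator statement by means of the continuous functional calculus of Remark \ref{remark}(e) together with the monotonicity principle recorded in the Remark immediately after it. Put $g:=\ln f$; by hypothesis $g$ is convex on $\mathring{J}$ with continuous, increasing derivative $g'=f'/f$, and I set $C:=\frac{f'(M)}{f(M)}-\frac{f'(m)}{f(m)}=g'(M)-g'(m)\ge 0$. Writing $L(t):=\frac{M-t}{M-m}g(m)+\frac{t-m}{M-m}g(M)$ for the chord of $g$ over $[m,M]$, one has $e^{L(t)}=[f(M)]^{\frac{t-m}{M-m}}[f(m)]^{\frac{M-t}{M-m}}$, which is exactly the scalar function whose functional calculus appears in the numerator of the first ratio.

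The three scalar estimates, to hold for every $t\in[m,M]$, are: (a) $g(t)\le L(t)$, i.e. $f(t)\le e^{L(t)}$, which is merely convexity of $g$; (b) $L(t)-g(t)\le \frac{(M-t)(t-m)}{M-m}\,C$, i.e. $e^{L(t)}\le f(t)\exp\!\left[\frac{(M-t)(t-m)}{M-m}C\right]$; and (c) $f(t)\exp\!\left[\frac{(M-t)(t-m)}{M-m}C\right]\le e^{\frac{M-m}{4}C}f(t)$, which follows from $C\ge 0$ and the elementary bound $(M-t)(t-m)\le\frac{(M-m)^2}{4}$.

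The substance is in (b), and I would prove it without invoking a second derivative (the hypotheses only grant $g'$ continuous). From $g(t)-g(m)=\int_m^t g'$ and $g(M)-g(t)=\int_t^M g'$ one gets
\[
L(t)-g(t)=\frac{(M-t)(t-m)}{M-m}\left(\frac{1}{M-t}\int_t^M g'(s)\,ds-\frac{1}{t-m}\int_m^t g'(s)\,ds\right).
\]
The two bracketed quantities are the averages of $g'$ over $[t,M]$ and over $[m,t]$; as $g'$ is increasing the first is at most $g'(M)$ and the second is at least $g'(m)$, so the parenthesis is bounded by $C$, giving (b). This monotone-average trick is the only nonroutine ingredient; the rest is bookkeeping.

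Finally I would lift (a)--(c) to operators. Each side is a continuous real function of $t$ on $\sigma_S(T)\subseteq[m,M]$, so Remark \ref{remark}(e) and the monotonicity Remark convert the pointwise orderings into operator orderings between the corresponding functions of $T$; since all the operators in sight are continuous functional calculi of the single selfadjoint $T$ they commute, and $\Phi_T$ being a homomorphism lets me read $f(T)\exp\!\left[\frac{(MI-T)(T-mI)}{M-m}C\right]$ as the functional calculus of the product function. Pairing with a unit vector $x$ and observing that $f\ge\min_{[m,M]}f>0$ forces $\langle f(T)x,x\rangle\ge\min_{[m,M]}f>0$ (a positive real, so the division is legitimate), (a) produces $1\le\langle [f(M)]^{\frac{T-mI}{M-m}}[f(m)]^{\frac{MI-T}{M-m}}x,x\rangle/\langle f(T)x,x\rangle$, (b) produces the middle inequality, and (c) delivers the upper bound $\exp[\frac14(M-m)C]$. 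This is the quaternionic transcription of the argument for Theorem 3.3 in \cite{Drag}.
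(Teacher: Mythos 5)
Your proof is correct and follows essentially the same route the paper relies on (it gives no argument of its own, deferring to Theorem 3.3 of \cite{Drag}): reduce the chain to three pointwise inequalities for $g=\ln f$ on $[m,M]$ and lift them through the functional calculus of Remark \ref{remark}(e). The only cosmetic difference is that you obtain the key scalar estimate $L(t)-g(t)\le \frac{(M-t)(t-m)}{M-m}\bigl(g'(M)-g'(m)\bigr)$ from monotone averages of $g'$ rather than from the two-point gradient inequality, which is an equivalent use of the same convexity fact.
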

\begin{Corollary}
Let $T\in \mathfrak{B}(\mathsf{H})$ be a positive selfadjoint operator with $\sigma_S(T)\subset [m, M]$ for two real numbers $0<m<M$. If $T$ is invertible, then for each unit vector $x\in\mathsf{H}$ and $r>0$, 
\begin{eqnarray*}
1& \leq& \dfrac{  \Biggl \langle [f(M)]^{{\frac{T-mI}{M-m}}}[f(m)]^{\frac{MI-T}{M-m}}x,x \Bigg \rangle}{\langle f(T)x,x \rangle}\leq 
 \dfrac{\Biggl \langle T^{-r}\exp \Biggl [\dfrac{r(MI-T)(T-mI)}{mM}\Bigg ]x, x\Bigg \rangle}{\langle T^{-r}x, x\rangle}\\
&\leq& \exp \Biggl [\dfrac{1}{4}r\dfrac{(M-m)^2}{mM}\Bigg ].
\end{eqnarray*}
\end{Corollary}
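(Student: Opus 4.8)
The plan is to obtain this corollary as a direct specialization of Theorem \ref{inequality'''}, taking the particular function $f(t)=t^{-r}$ on the interval $J=(0,+\infty)$, whose interior contains $[m,M]$ since $0<m<M$. First I would check that this $f$ meets every hypothesis of Theorem \ref{inequality'''}. It is positive on $(0,+\infty)$; since $\ln f(t)=-r\ln t$ with $r>0$ and $-\ln t$ is convex, $\ln f$ is convex, so $f$ is log-convex. It is also differentiable, with $f'(t)=-r\,t^{-r-1}$ continuous on $(0,+\infty)$. Hence Theorem \ref{inequality'''} applies verbatim, and $f(T)=T^{-r}$ is well defined through the continuous functional calculus of Remark \ref{remark}(e), which is legitimate because $T$ is positive and invertible with $\sigma_S(T)\subset[m,M]\subset(0,+\infty)$.

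Next I would compute the logarithmic derivative that recurs in all three bounds, namely
$$\frac{f'(t)}{f(t)}=\frac{-r\,t^{-r-1}}{t^{-r}}=-\frac{r}{t}.$$
Evaluating at the endpoints gives $\frac{f'(M)}{f(M)}=-\frac{r}{M}$ and $\frac{f'(m)}{f(m)}=-\frac{r}{m}$, so the difference appearing throughout Theorem \ref{inequality'''} simplifies to
$$\frac{f'(M)}{f(M)}-\frac{f'(m)}{f(m)}=-\frac{r}{M}+\frac{r}{m}=r\,\frac{M-m}{mM}.$$

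Finally I would substitute these into the chain of Theorem \ref{inequality'''} term by term. The leftmost fraction is carried over directly (with $f(T)=T^{-r}$, $f(M)=M^{-r}$, $f(m)=m^{-r}$), giving the first inequality unchanged. In the middle term the exponent $\frac{(MI-T)(T-mI)}{M-m}\big(\frac{f'(M)}{f(M)}-\frac{f'(m)}{f(m)}\big)$ collapses, via the displayed computation and the cancellation of the factor $M-m$, to $\frac{r(MI-T)(T-mI)}{mM}$, and with $f(T)=T^{-r}$ this reproduces the middle fraction of the corollary. In the rightmost term the exponent $\frac{1}{4}(M-m)\big(\frac{f'(M)}{f(M)}-\frac{f'(m)}{f(m)}\big)$ becomes $\frac{1}{4}\,r\,\frac{(M-m)^2}{mM}$. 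Assembling the three evaluated pieces yields exactly the claimed chain of inequalities. The only point needing mild care is the bookkeeping of the cancellation of $M-m$ and the verification that $t^{-r}$ is genuinely log-convex for every $r>0$; no real obstacle arises, since the corollary is a straightforward substitution into the already-established theorem.
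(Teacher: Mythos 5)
Your proposal is correct and is exactly the route the paper intends: the corollary is obtained by substituting the log-convex function $f(t)=t^{-r}$ ($t>0$, $r>0$) into Theorem \ref{inequality'''} and simplifying $\frac{f'(M)}{f(M)}-\frac{f'(m)}{f(m)}=r\frac{M-m}{mM}$, which is precisely the computation you carry out. Your verification of log-convexity and differentiability, and the cancellation of the factor $M-m$ in the middle exponent, all match the paper's (implicit) argument.
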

\section{Application}
Following \cite{Drag}, we use the log-convex function  $f(t)= \left( \dfrac{1-t}{t} \right)^r, r>0$  on $(0, 1/2)$ to obtain the special cases of the inequalities in Theorems \ref{Mondlog}, \ref{Lah}, \ref{inequalty'}. With $r=1$, the Jensen's inequalty for $f$ implies the Ky Fan's inequality
\begin{equation*}
\dfrac{1-\sum _{i=1}^n p_i t_i}{\sum _{i=1}^n p_i t_i}\leq \prod_{i=1}^n \left(\dfrac{1-t_i}{t_i}\right)^{p_i},
\end{equation*} 
where $p_i>0$ and $t_i\in (0, 1/2)$ for $i\in \{1,...,n\}$ and $\sum _{i=1}^n p_i=1$. 
\begin{Proposition}
Let $T\in \mathfrak{B}(\mathsf{H})$ be a positive selfadjoint operator with $\sigma_S(T)\subset [m, M] \subset (0, 1/2)$ for two real numbers $0<m<M$. If $T$ is invertible, then for each unit vector $x\in\mathsf{H}$ and $r>0$, 
\begin{eqnarray*}
\biggl \langle \biggl((I-T)T^{-1}\bigg)^r x,x \bigg \rangle \geq \exp \biggl\langle \ln \biggl(T^{-1}(I-T)\bigg )^r x,x\bigg\rangle\geq \biggl (\langle (I-T)x, x \rangle \langle Tx,x \rangle ^{-1}\bigg )^r.
\end{eqnarray*}
\end{Proposition}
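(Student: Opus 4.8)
The plan is to apply Theorem \ref{Mondlog} directly to the single function $f(t)=\left(\dfrac{1-t}{t}\right)^r$ on the interval $[m,M]\subset(0,1/2)$, and then read off the three members of the resulting chain through the continuous functional calculus of Remark \ref{remark}(e). The inequality asserted in the Proposition is precisely that chain written from right to left, so beyond identifying the three terms no genuinely new argument is required.

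First I would check that $f$ is a positive continuous log-convex function on $(0,1/2)$, which is exactly the hypothesis demanded by Theorem \ref{Mondlog}. Positivity and continuity are immediate, since $\dfrac{1-t}{t}>0$ for $t\in(0,1)$. For log-convexity, writing $\ln f(t)=r\bigl(\ln(1-t)-\ln t\bigr)$ and differentiating twice gives $(\ln f)''(t)=r\left(\dfrac{1}{t^2}-\dfrac{1}{(1-t)^2}\right)$, which is nonnegative precisely when $t\leq 1/2$. This is the one place where the restriction $[m,M]\subset(0,1/2)$ enters, and it explains why the construction works only on that interval. Since $\sigma_S(T)\subset[m,M]\subset(0,1/2)$, the hypotheses of Theorem \ref{Mondlog} are satisfied.

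Next I would identify the three operator quantities. Because $T$ is positive and invertible with $0\notin\sigma_S(T)$, the operators $T^{-1}$ and $I-T$ are functions of $T$ through $\Phi_T$, hence commute, and $(I-T)T^{-1}=T^{-1}(I-T)$ is positive; applying the homomorphism and composition properties of the functional calculus to $f=h\circ g$ with $g(t)=\dfrac{1-t}{t}$ and $h(s)=s^r$ yields $f(T)=\bigl((I-T)T^{-1}\bigr)^r$, the leftmost term, and likewise $\ln f(T)=\ln\bigl(T^{-1}(I-T)\bigr)^r$, the middle term. For the rightmost term, since $T$ is selfadjoint the scalar $\langle Tx,x\rangle$ is real and, by Theorem \ref{compact interval}(i), lies in $[m_T,M_T]\subseteq[m,M]$; using that $x$ is a unit vector, so that $1=\langle x,x\rangle$ and hence $1-\langle Tx,x\rangle=\langle(I-T)x,x\rangle$, we obtain $f(\langle Tx,x\rangle)=\bigl(\langle(I-T)x,x\rangle\,\langle Tx,x\rangle^{-1}\bigr)^r$.

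Substituting these identifications into $f(\langle Tx,x\rangle)\leq\exp\langle\ln f(T)x,x\rangle\leq\langle f(T)x,x\rangle$ and reversing the chain reproduces exactly the claimed inequality. I expect no substantial obstacle; the only point that demands care is the functional-calculus bookkeeping in the previous step, namely confirming that forming the product $(I-T)T^{-1}$, inverting, and raising to the real power $r$ all commute with the homomorphism $\Phi_T$. This rests on the commutativity of $I-T$ and $T^{-1}$ as functions of the single operator $T$, together with the composition rule for the continuous functional calculus.
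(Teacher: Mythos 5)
Your proposal is correct and follows exactly the paper's route: the paper proves this proposition by the one-line observation that applying Theorem \ref{Mondlog} to the log-convex function $f(t)=\left(\frac{1-t}{t}\right)^r$ on $(0,1/2)$ yields the result. Your additional verification of log-convexity via $(\ln f)''(t)=r\left(\frac{1}{t^2}-\frac{1}{(1-t)^2}\right)\geq 0$ for $t\leq 1/2$, and the functional-calculus identification of the three terms, simply fills in details the paper leaves implicit.
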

\begin{proof}
In Theorem \ref{Mondlog}, applying the log-convex function $f(t)= \left(\dfrac{1-t}{t}\right)^r, t\in (0, 1/2), r>0$  implies the result.
\end{proof}
\begin{Proposition}
Let $T\in \mathfrak{B}(\mathsf{H})$ be a positive selfadjoint operator with $\sigma_S(T)\subset [m, M] \subset (0, 1/2)$ for two real numbers $0<m<M$. If $T$ is invertible, then for each unit vector $x\in\mathsf{H}$ and $r>0$, 
\begin{eqnarray*}
\Biggl \langle ((I-T)T^{-1})^r x,x\Bigg \rangle &\leq & \Biggl \langle \Biggl [\left(\dfrac{1-m}{m}\right)^{\frac{r(MI-T)}{M-m}}\left(\dfrac{1-M}{M}\right)^{\frac{r(T- MI)}{M-m}}\Bigg ]x,x\Bigg \rangle \\ &\leq& \dfrac{M- \langle Tx,x\rangle}{M-m}\left(\dfrac{1-m}{m}\right)^r+ \dfrac{\langle Tx,x\rangle -m}{M-m}\left(\dfrac{1-M}{M}\right)^r,
\end{eqnarray*}
and
\begin{eqnarray*}
\left(\dfrac{1- \langle Tx,x \rangle}{\langle Tx,x \rangle}\right)^r &\leq & \left(\dfrac{1-m}{m}\right)^{\frac{r(M- \langle Tx,x \rangle )}{M-m}} \left(\dfrac{1-M}{M}\right)^{\frac{r( \langle Tx,x \rangle -m )}{M-m}} \\ &\leq & \Biggl \langle \Biggl[\left(\dfrac{1-m}{m}\right)^{\frac{r(MI-T)}{M-m}}\left(\dfrac{1-M}{M}\right)^{\frac{r(T- MI)}{M-m}}\Bigg ]x,x\Bigg \rangle.
\end{eqnarray*}
\end{Proposition}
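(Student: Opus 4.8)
The plan is to recognize this proposition as nothing more than the specialization of Theorem~\ref{Lah} to the single function $f(t) = \left(\frac{1-t}{t}\right)^r$ on $(0,1/2)$, so that both displayed chains of inequalities drop out of parts (i) and (ii) of that theorem once $f$ is shown to satisfy its hypotheses on $[m,M]$.

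First I would check that $f$ meets the standing assumptions of Theorem~\ref{Lah}. On $(0,1/2)$ one has $1-t > t > 0$, whence $\frac{1-t}{t} > 1 > 0$ and $f(t) > 0$; continuity is immediate, and since $[m,M]\subset(0,1/2)$ this makes $f$ a well-defined positive continuous function on $[m,M]$. The one genuine computation is log-convexity: writing $\ln f(t) = r\left[\ln(1-t) - \ln t\right]$ and differentiating twice gives $(\ln f)''(t) = r\left(\frac{1}{t^2} - \frac{1}{(1-t)^2}\right)$, which is strictly positive precisely because $t < 1-t$ throughout $(0,1/2)$. Hence $\ln f$ is convex and $f$ is log-convex on $(0,1/2)$, in particular on $[m,M]$.

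Next I would record the functional-calculus identifications that rewrite the abstract quantities of Theorem~\ref{Lah} in the concrete form of the proposition. Since $m>0$ forces $T$ to be positive and invertible with spectrum bounded away from $0$, the homomorphism property of $\Phi_T$ from Remark~\ref{remark}(e) gives $f(T) = ((I-T)T^{-1})^r$, where $(I-T)T^{-1} = T^{-1}(I-T)$ is positive because $(1-t)/t > 0$ on $\sigma_S(T)$. The endpoint scalars are $f(m) = \left(\frac{1-m}{m}\right)^r$ and $f(M) = \left(\frac{1-M}{M}\right)^r$, and the operator powers $[f(m)]^{\frac{MI-T}{M-m}}$, $[f(M)]^{\frac{T-mI}{M-m}}$ --- interpreted via $c^{A} = \exp(A\ln c)$ as functions of $T$ --- become $\left(\frac{1-m}{m}\right)^{\frac{r(MI-T)}{M-m}}$ and $\left(\frac{1-M}{M}\right)^{\frac{r(T-mI)}{M-m}}$ respectively. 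Substituting these into Theorem~\ref{Lah}(i) produces the first chain, while substituting $f(\langle Tx,x\rangle) = \left(\frac{1-\langle Tx,x\rangle}{\langle Tx,x\rangle}\right)^r$ together with the same scalar powers into Theorem~\ref{Lah}(ii) produces the second chain.

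The argument is essentially a verification, so I do not anticipate a serious obstacle; the only point demanding care is keeping the operator powers unambiguous. Because the exponents $\frac{MI-T}{M-m}$ and $\frac{T-mI}{M-m}$ are affine functions of $T$, all the factors above are simultaneously functions of the single selfadjoint operator $T$ and hence commute, so the product notation is meaningful and the inequalities transfer verbatim. The likeliest pitfall is a bookkeeping slip in the exponent of the $\left(\frac{1-M}{M}\right)$ factor: the substitution $[f(M)]^{\frac{T-mI}{M-m}}$ yields the power $\frac{r(T-mI)}{M-m}$, which I would confirm against the exponent written in the displayed statement before concluding.
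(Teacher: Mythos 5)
Your proposal is correct and follows exactly the paper's own (one-line) proof: substitute the log-convex function $f(t)=\left(\frac{1-t}{t}\right)^{r}$ on $(0,1/2)$ into Theorem \ref{Lah}, with your added verification of log-convexity via $(\ln f)''(t)=r\left(\frac{1}{t^{2}}-\frac{1}{(1-t)^{2}}\right)>0$ supplying detail the paper omits. The discrepancy you flagged is real: the substitution yields the exponent $\frac{r(T-mI)}{M-m}$ on the $\left(\frac{1-M}{M}\right)$ factor, consistent with the scalar exponent $\frac{r(\langle Tx,x\rangle-m)}{M-m}$ in the second chain, so the ``$T-MI$'' appearing in the displayed statement is a typo in the paper rather than an error in your derivation.
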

\begin{proof}
In the inequalities in Theorem \ref{Lah}, using the log-convex function $f(t)= \left(\dfrac{1-t}{t}\right)^r, t\in (0, 1/2), r>0$ completes the proof.
\end{proof}
\begin{Proposition}
Let $T\in \mathfrak{B}(\mathsf{H})$ be a positive selfadjoint operator with $\sigma_S(T)\subset [m, M] \subset (0, 1/2)$ for two real numbers $0<m<M$. If $T$ is invertible, then for each unit vector $x\in\mathsf{H}$ and $r>0$, 
\begin{eqnarray*}
1 \leq  \dfrac{\exp \biggl\langle \ln \biggl((I-T)T^{-1}\bigg)^rx,x\bigg \rangle}{\biggl((1- \langle Tx,x \rangle)  \langle Tx,x \rangle^{-1}\bigg)^r}
  \leq  \exp \biggl [r \biggl (\langle Tx,x \rangle \langle T^{-1}(I-T)^{-1}x,x \rangle - \langle (I-T)^{-1}x,x \rangle \bigg )\bigg]
\end{eqnarray*}
and
\begin{eqnarray*}
1&\leq & \biggl \langle \exp \biggl [r(1-\langle Tx,x\rangle )^{-1} \biggl (I-\langle Tx,x \rangle ^{-1} T\bigg )\bigg ]x,x\bigg \rangle\\
 &\leq & \dfrac{\biggl \langle \biggl ((I-T)T^{-1}\bigg )^r x,x \bigg \rangle}{\biggl((1- \langle Tx,x \rangle)  \langle Tx,x \rangle^{-1}\bigg)^r} \leq  \biggl \langle \exp \biggl [r(I-T)^{-1}\biggl (\langle Tx,x \rangle T^{-1} - I\bigg )\bigg ]x,x \bigg \rangle.
\end{eqnarray*} 
\end{Proposition}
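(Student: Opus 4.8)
The plan is to specialize the two abstract operator inequalities already proved, namely Theorem~\ref{inequalty'}(ii) and Theorem~\ref{inequality''}, to the single log-convex function $f(t)=\left(\frac{1-t}{t}\right)^r$ on the interval $(0,1/2)$, and then to rewrite the resulting operator expressions in the explicit closed forms appearing in the statement. The two displayed chains correspond exactly to these two theorems, so no new inequality has to be established; the work is entirely in computing $f$, $\ln f$ and the logarithmic derivative $f'/f$ and transporting them through the continuous functional calculus $\Phi_T$ of Remark~\ref{remark}(e).

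First I would check the hypotheses. On $(0,1/2)$ the function $f$ is positive, differentiable with continuous derivative, and writing $\ln f(t)=r\bigl(\ln(1-t)-\ln t\bigr)$ a direct computation gives $(\ln f)''(t)=r\bigl(t^{-2}-(1-t)^{-2}\bigr)$, which is nonnegative precisely when $t\le 1/2$; hence $f$ is log-convex on $(0,1/2)$, as required. The two quantities that drive both chains are $\ln f(t)=r\ln\!\left(\frac{1-t}{t}\right)$ and the logarithmic derivative $\frac{f'(t)}{f(t)}=(\ln f)'(t)=-\frac{r}{t(1-t)}$. Since $\sigma_S(T)\subset[m,M]\subset(0,1/2)$, both $T$ and $I-T$ are positive and invertible, so by the functional calculus $\ln f(T)=r\ln\bigl((I-T)T^{-1}\bigr)$ and $f'(T)[f(T)]^{-1}=-r\,T^{-1}(I-T)^{-1}$ are well-defined selfadjoint operators that are continuous functions of $T$, and for any unit vector $x$ the scalar $\langle Tx,x\rangle$ lies in $[m,M]\subset(0,1/2)$.

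For the first chain I would substitute these data into Theorem~\ref{inequalty'}(ii). The denominator $f(\langle Tx,x\rangle)$ becomes $\bigl((1-\langle Tx,x\rangle)\langle Tx,x\rangle^{-1}\bigr)^r$ and the numerator $\exp\langle\ln f(T)x,x\rangle$ becomes $\exp\bigl\langle\ln((I-T)T^{-1})^r x,x\bigr\rangle$. In the right-hand bracket I insert $f'(T)[f(T)]^{-1}=-r\,T^{-1}(I-T)^{-1}$; in the first term the factor $T^{-1}T$ cancels to $I$, leaving $-r\langle(I-T)^{-1}x,x\rangle$, and the whole bracket collapses to $r\bigl(\langle Tx,x\rangle\langle T^{-1}(I-T)^{-1}x,x\rangle-\langle(I-T)^{-1}x,x\rangle\bigr)$, which is exactly the claimed bound.

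For the second chain I would substitute into Theorem~\ref{inequality''}. The scalar logarithmic derivative at $\langle Tx,x\rangle$ equals $-\frac{r}{\langle Tx,x\rangle(1-\langle Tx,x\rangle)}$; multiplying by $(T-\langle Tx,x\rangle I)$ and collecting terms gives $r(1-\langle Tx,x\rangle)^{-1}\bigl(I-\langle Tx,x\rangle^{-1}T\bigr)$, the lower exponent. The middle quotient is immediate from the formulas for $f(T)$ and $f(\langle Tx,x\rangle)$. For the upper bound I insert $f'(T)[f(T)]^{-1}=-r\,T^{-1}(I-T)^{-1}$ into $f'(T)[f(T)]^{-1}(T-\langle Tx,x\rangle I)$ and use the commutativity of functions of $T$ to simplify $T^{-1}(I-T)^{-1}T=(I-T)^{-1}$, obtaining $r(I-T)^{-1}\bigl(\langle Tx,x\rangle T^{-1}-I\bigr)$. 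The only point needing real care throughout is the legitimacy of these operator cancellations and factorizations; this is guaranteed because every operator in sight is a continuous function of the single selfadjoint operator $T$, so $\Phi_T$ turns each manipulation into an ordinary scalar identity on $\sigma_S(T)$ pulled back to operators. Beyond this bookkeeping the proposition is a direct specialization.
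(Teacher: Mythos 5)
Your proposal is correct and follows essentially the same route as the paper, which simply specializes the abstract theorems to $f(t)=\left(\frac{1-t}{t}\right)^r$ on $(0,1/2)$; your computations of $(\ln f)''$, of $f'(T)[f(T)]^{-1}=-r\,T^{-1}(I-T)^{-1}$, and of the resulting exponents all check out. If anything, your write-up is more careful than the paper's one-line proof, which cites only Theorem~\ref{inequalty'}(ii) even though the second chain requires Theorem~\ref{inequality''}, as you correctly identify.
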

\begin{proof}
In Theorem \ref{inequalty'} (ii), using the log-convex function $f(t)= \left(\dfrac{1-t}{t}\right)^r, t\in (0, 1/2), r>0$  obtains the results.
\end{proof}

\section*{Disclosure statement}

No potential conflict of interest was reported by the author.


\bibliographystyle{amsplain}

\end{document}